\theoremstyle{plain}
\newtheorem{thm}{Theorem}
\newtheorem{cor}{Corollary}
\newtheorem{lemma}{Lemma}
\theoremstyle{definition}
\newtheorem{definition}[thm]{Definition}
\newtheorem{rem}{Remark}
\newtheorem{setup}[thm]{Setup}
\newcommand{\R}{{\mathbb{R}}}
\newcommand{\CP}{{\mathbb{CP}}}
\newcommand{\RP}{{\mathbb{RP}}}
\newcommand{\T}{{\mathbb{T}}}
\newcommand{\Z}{{\mathbb{Z}}}
\newcommand{\N}{{\mathbb{N}}}
\newcommand{\C}{{\mathbb{C}}}
\newcommand{\Per}{\operatorname{Per}}
\newcommand{\Ham}{\operatorname{Ham}}
\newcommand{\supp}{\operatorname{supp}}
\newcommand{\tHam}{\widetilde{\operatorname{Ham}}}
\newcommand{\cA}{\mathcal{A}}
\newcommand{\Diff}{\operatorname{Diff}}
\newcommand{\Spec}{\operatorname{Spec}}
\newcommand{\Fix}{\operatorname{Fix}}
\newcommand{\Symp}{\operatorname{Symp}}
\newcommand{\id}{\mathds{1}}
\newcommand{\tV}{\tilde{\varphi}}
\title{Ergodic closing lemmas and invariant Lagrangians}
\author{Erman \c C\. inel\. i, Sobhan Seyfaddini, and Shira Tanny}
\begin{document}

\maketitle
\begin{abstract}
    Motivated by the ergodic closing lemma of Mañé, we investigate the  $C^\infty$  closing lemma in higher-dimensional Hamiltonian systems, with a focus on the statistical behavior of periodic orbits generated by  $C^\infty$-small perturbations. We demonstrate that, under certain Floer-theoretic conditions, invariant or recurrent Lagrangian submanifolds can give rise to periodic orbits whose statistical properties are controllable. For instance, we show that for Hamiltonian systems preserving the zero section in $T^*\mathbb{T}^n$, $C^\infty$ generically, there exist periodic orbits converging to  an invariant measure supported on the zero section. 
\end{abstract}
\tableofcontents

\section{Introduction}
\label{sec:intro}
This paper is motivated by the study of the smooth closing lemma on higher dimensional symplectic manifolds. 

In the conservative context, where a symplectic or volume form is preserved, the $C^k$ closing lemma is typically stated as follows: for any open set $U$, there exists a $C^k$-small perturbation of the system that produces a periodic point or orbit passing through $U$. While the $C^1$ closing lemma was established some time ago \cite {Pugh1, Pugh2, Pugh-Robinson}, the higher regularity cases remain unresolved, except in the case of area-preserving maps on surfaces, where the $C^\infty$ version was proven using methods from low-dimensional symplectic topology \cite{i2015,asaoka2016c,cristofaro2021smooth,edtmair2021pfh}.

In higher dimensions, the $C^\infty$ closing lemma remains largely unresolved. Aside from a few specific cases (see \cite{cineli2022strong, CDPT,   CT2023, Xue}), there is no general method for detecting the creation of periodic orbits through $C^\infty$-small perturbations. In this paper, we demonstrate that, under certain Floer theoretic conditions, $C^\infty$-small perturbations of Hamiltonian systems with invariant or recurrent Lagrangian submanifolds  generate periodic orbits in arbitrarily small neighborhoods of the Lagrangian. Moreover, we go further by analyzing the statistical properties of these newly created periodic orbits—this is a central contribution of our work.

\subsection*{Statistical behavior of periodic orbits.}  In $C^1$ dynamics, there exist techniques for controlling the statistical behavior of periodic orbits that arise from carefully constructed perturbations.  Using such techniques one can show that, $C^1$ generically, periodic orbits are dense among ergodic measures; this is a consequence of the ergodic closing lemma of Ma\~n\'e \cite{mane1982ergodic} (see \cite[Sec. 4.1]{ABC}). 

As far as we know, there are no results pertaining to the statistics of periodic orbits in the context of the $C^\infty$ closing lemma on higher dimensional symplectic or contact manifolds.\footnote{In contrast to this, in the case of surfaces and 3D Reeb flows there exist results of this nature, due to Irie \& Prasad, proving $C^\infty$ generic equidistribution of periodic orbits; see \cite{Irie-equidist, prasad2022}.}   One of our main goals in this paper is to show that, in the presence of invariant Lagrangians, it is indeed possible to exert some control over the statistical behavior of periodic orbits.  Here is a sample result.

Consider $\Ham_c (T^*\T^n, \omega_{0})$, the set of compactly supported Hamiltonian diffeomorphisms of the cotangent bundle of the $n$-torus, equipped with its canonical symplectic structure.  Let $\Ham_{T_0}(T^*\T^n, \omega_0) := \{ \psi \in \Ham_c (T^*\T^n, \omega_0): \psi(T_0) =T_0 \}$, where $T_0$ denotes the zero section.

\begin{cor}\label{cor:cotangent}
    A $C^\infty$-generic element $\psi  \in \Ham_{T_0} (T^*\T^n, \omega_0)$ possesses a sequence of periodic orbits converging to an invariant measure supported on $T_0$. 
\end{cor}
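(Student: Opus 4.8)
The plan is to combine a $C^\infty$ closing-type mechanism localized near the zero section $T_0$ with a statistical-control argument, then upgrade it to a genericity statement via a Baire-category argument. Since $T_0$ is a fixed Lagrangian for every $\psi\in\Ham_{T_0}$, the dynamics of $\psi$ restricted to $T_0$ is a diffeomorphism of $\T^n$; but we do not want to close orbits inside $T_0$ (which would require a closing lemma on the torus itself), rather we want to produce periodic orbits of $\psi$ in $T^*\T^n$ that are close to $T_0$ and whose empirical measures converge to an invariant measure supported on $T_0$. The first step is to invoke the main Floer-theoretic result of the paper (stated earlier in the body, which I am assuming): because the zero section of $T^*\T^n$ is a monotone/exact Lagrangian with non-vanishing Lagrangian Floer homology that is moreover "heavy" or has full quantum/Floer-theoretic support, any $\psi$ preserving it has its action/spectral invariants controlled by $T_0$, and this forces — after an arbitrarily $C^\infty$-small perturbation — the existence of a periodic orbit in any prescribed neighborhood of $T_0$. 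Concretely, I would feed a shrinking sequence of neighborhoods $U_k\downarrow T_0$ into the closing mechanism to get, for each $k$, a $C^\infty$-$(1/k)$-perturbation $\psi_k$ of $\psi$ with a periodic orbit $\gamma_k\subset U_k$.

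The second step is the statistical refinement: I want the empirical measure $\mu_{\gamma_k} := \frac{1}{\ell_k}\sum_{j=0}^{\ell_k-1}\delta_{\psi_k^j(x_k)}$ (with $x_k\in\gamma_k$, $\ell_k$ the period) to converge weak-$*$ to an invariant measure on $T_0$. Since $\gamma_k\subset U_k$ and $U_k$ shrinks to $T_0$, every weak-$*$ limit point of $\mu_{\gamma_k}$ is automatically supported on $T_0$; the only genuine content is that $\{\mu_{\gamma_k}\}$ has a limit point at all, which is immediate by weak-$*$ compactness of probability measures on the (compactified, or a fixed compact neighborhood of the) phase space. So in fact \emph{any} sequence of periodic orbits with $\gamma_k\to T_0$ in the Hausdorff sense does the job — the statistical statement is essentially for free once the orbits are squeezed against $T_0$. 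The one subtlety is invariance of the limit: $\mu_{\gamma_k}$ is $\psi_k$-invariant, not $\psi$-invariant, but since $\psi_k\to\psi$ in $C^\infty$ (hence $C^0$) and the measures live in a fixed compact set, passing to the limit in $\int f\circ\psi_k\, d\mu_{\gamma_k} = \int f\, d\mu_{\gamma_k}$ shows the limit measure is $\psi$-invariant; however, since we are building a generic $\psi$ and not perturbing a fixed one, this point will be handled by the Baire argument rather than needing it here.

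The third step is the passage from "for every $\psi$ and every $\epsilon$, an $\epsilon$-perturbation has the desired feature" to "$C^\infty$-generically the feature holds." I would define, for each $k$, the set $\mathcal{G}_k\subset\Ham_{T_0}(T^*\T^n,\omega_0)$ of $\psi$ possessing a periodic orbit $\gamma$ with $\mathrm{Hausdorff\ distance}(\gamma, T_0) < 1/k$; equivalently, a periodic point in the $1/k$-neighborhood of $T_0$. Each $\mathcal{G}_k$ is open (a periodic point in an open set persists under $C^\infty$-small, indeed $C^1$-small, perturbation — one must be mildly careful if the orbit is degenerate, but openness of "has a periodic point in $U_k$" can be arranged, e.g. by first perturbing to make a chosen periodic point nondegenerate, or by working with the closure and using that $\Ham_{T_0}$ is a Baire space under the $C^\infty$ topology) and dense by the first step. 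Then $\mathcal{G} := \bigcap_k \mathcal{G}_k$ is residual, and any $\psi\in\mathcal{G}$ has, for each $k$, a periodic orbit $\gamma_k$ within $1/k$ of $T_0$; by step two the empirical measures $\mu_{\gamma_k}$ are $\psi$-invariant and subconverge to a $\psi$-invariant probability measure supported on $T_0$, which is the conclusion of Corollary~\ref{cor:cotangent}.

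The main obstacle is the first step — establishing the localized $C^\infty$ closing mechanism near $T_0$ — but this is precisely what the paper's main theorem is designed to deliver, so in this writeup it is invoked as a black box; the genuinely new work is verifying that the zero section of $T^*\T^n$ satisfies the Floer-theoretic hypotheses of that theorem (nonvanishing Lagrangian Floer homology, the appropriate heaviness/superheaviness, and compatibility with compact support), which is classical for the monotone/exact zero section. A secondary subtlety is the openness of $\mathcal{G}_k$: degenerate periodic orbits need not persist, so one should either incorporate a nondegeneracy perturbation into the density step or replace $\mathcal{G}_k$ by the interior of the set of $\psi$ with a periodic point in $\overline{U_{k}}$ and check density of that interior — a standard maneuver in closing-lemma Baire arguments.
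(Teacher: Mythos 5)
Your overall scaffolding (localized closing mechanism near $T_0$, squeeze the empirical measures, Baire category) matches the paper's strategy in spirit, but there is a genuine gap in the first step that you have not addressed, and it is precisely the non-trivial content of the corollary.

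The main theorem you propose to invoke (Theorem~\ref{thm:closing_near_heavy}) requires the ambient manifold $(M,\omega)$ to be \emph{closed}, \emph{rational}, and \emph{bounded}, and requires $L$ to be heavy in the Entov--Polterovich sense — all of which are formulated via Hamiltonian Floer theory on a closed manifold. The cotangent bundle $T^*\T^n$ is not closed, and the spectral pseudo-norm on $\Ham_c(T^*\T^n,\omega_0)$ is \emph{unbounded} (e.g.\ by energy-capacity considerations), so $T^*\T^n$ is not ``bounded'' in the sense of Definition~\ref{defn:bdd}. Your remark that the remaining work is ``verifying that the zero section of $T^*\T^n$ satisfies the Floer-theoretic hypotheses\ldots which is classical for the monotone/exact zero section'' is misleading: the hypotheses cannot be verified because they do not apply to $T^*\T^n$. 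What the paper actually does is symplectically embed a tubular neighborhood of $T_0\subset T^*\T^n$ into $\CP^n$ so that $T_0$ maps onto the Clifford torus; one then checks that $\CP^n$ is bounded and the Clifford torus is heavy, and runs Theorem~\ref{thm:closing_near_heavy} in $\CP^n$, noting that the perturbations and the newly created periodic orbits are supported in the neighborhood of $T_0$ and hence pull back to $T^*\T^n$. Without this change of ambient manifold the argument does not get off the ground.

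A secondary point: your Baire sets $\mathcal{G}_k$ are defined by the existence of a periodic orbit within Hausdorff distance $1/k$ of $T_0$, i.e.\ entirely contained in a small neighborhood. The paper's closing theorems (Theorem~\ref{thm:precise1}, Corollary~\ref{cor:precise_for_invariant}) do not produce such orbits; they produce periodic points whose orbit visits a neighborhood of $L$ with \emph{high recurrence frequency} $\rho_\phi(x,U_n)>1-1/n$, but the orbit may well leave the neighborhood a bounded number of times. The paper accordingly defines $\mathcal{G}_n$ via the recurrence frequency (see the proof of Theorem~\ref{thm:closing_near_heavy}), and the convergence of empirical measures to a measure supported on $L$ is deduced from $\rho\to 1$ rather than from Hausdorff convergence. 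Your density claim as stated is stronger than what the theorems deliver; the fix is to replace your Hausdorff criterion by the frequency criterion, after which the limit-measure argument goes through unchanged.
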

As explained below, the above is an immediate consequence of Theorem \ref{thm:closing_near_heavy}.

\subsection*{Connections to KAM theory.}
Hamiltonian systems with invariant Lagrangian tori are studied extensively in the context of KAM theory. Let $H: T^*\T^n \rightarrow \R$ be a (sufficiently non-degenerate)  integrable Hamiltonian.  The phase space $T^*\T^n$ is foliated by Lagrangian tori which are invariant under the flow of $H$. According to  KAM theory, a significant  proportion of these invariant Lagrangian tori  persist for any $\tilde H: T^*\T^n \rightarrow \R$ which is sufficiently $C^\infty$-close to $H$. On these surving tori, which are known as KAM tori, 
the flow of $\tilde H$ is conjugate to an (Diophantine) irrational rotation and hence, in particular, is minimal. 

According to Conley and Zehnder (see the proof of \cite[Thm.\ 1.1]{Conley-Zehnder}), these  KAM tori are contained in the closure of the periodic orbits of $\tilde H$.  Hence, in the integrable setting Corollary \ref{cor:cotangent} can be deduced from KAM techniques. The results of this article, in particular Corollary \ref{cor:cotangent} \& Theorem \ref{thm:closing_near_heavy}, tell us that a similar conclusion holds in the non-integrable setting, where KAM theory is no longer applicable. Namely, under certain symplectic topological assumptions on the invariant Lagrangian and the ambient symplectic manifold, but with no integrability assumption on the map, we show that an invariant Lagrangian, which happens to be minimal, is contained in the closure of the periodic points of an arbitrarily small $C^\infty$-perturbation; see item (iv) in Remark \ref{rem:first remark}.

\subsection{Invariant Lagrangians}
 Let $(M, \omega)$ be a closed and connected symplectic manifold.  We suppose throughout the article that $(M, \omega)$ is {\bf rational}, meaning $\omega(\pi_2(M)) \subset \R$ is  discrete. The group of Hamiltonian diffeomorphisms $\Ham(M, \omega)$ admits, possibly several,  conjugation invariant pseudo-norms, commonly denoted by $\gamma$ and known as spectral pseudo-norms.  These were introduced in the works of  Viterbo, Schwarz and Oh \cite{viterbo92, schwarz, oh}. We say that $(M, \omega)$ is {\bf bounded} if it admits a bounded spectral pseudo-norm; see Definition \ref{defn:bdd} for further details.  Examples of bounded manifolds include the projective space $\C P^n$, and more generally all toric symplectic manifolds, as well as blow ups of closed symplectic manifolds \cite{usher2011deformed}.

Our results concern Hamiltonian systems admitting invariant subsets which are {\bf heavy} in the sense of Entov-Polterovich \cite{entov2009rigid}; this is a Floer theoretic notion, which we define precisely in Section \ref{sec:heaviness}, and it is typically satisfied by  Lagrangian submanifolds with non-vanishing Floer homology. The Clifford torus in $\C P^n$ is an example.

Examples of pairs $(M,L)$ of a rational symplectic manifold and a Lagrangian submanifold, such that  $M$ is bounded and $L$ is heavy includes $\C P^n$ with $L$ the Clifford torus and more generally certain toric manifolds with $L$ being a certain fiber of the moment map. There also exist examples where the heavy set $L$ is non-Lagrangian; one such example is the co-dimension one skeleton of a sufficiently fine triangulation.  See Section \ref{subsec:FH} for more details.

\medskip

Given $L \subset M$, let  $\Ham_L(M,\omega) := \{\psi \in \Ham(M, \omega): \psi(L) =L \}.$
Given $\psi \in \Ham_L(M,\omega)$, let $\Ham_{L, \psi}(M,\omega):= \{\phi \in \Ham_L(M,\omega): \phi|_L = \psi|_L \}.$

\begin{thm}\label{thm:closing_near_heavy}
Let  $(M, \omega)$ be rational and bounded.  Suppose that $L \subset M$ is heavy.

\begin{enumerate}
    \item A $C^\infty$-generic element $\psi \in \Ham_L(M,\omega)$,  possesses a sequence of periodic points converging to an invariant measure supported on $L$.

    \item Fix $\psi \in \Ham_L(M,\omega)$.  A $C^\infty$-generic element $\psi' \in \Ham_{L, \psi}(M,\omega)$,  possesses a sequence of periodic points converging to an invariant measure supported on $L$.
\end{enumerate}
\end{thm}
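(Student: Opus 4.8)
The plan is to combine a quantitative closing lemma with a Baire-category argument, using heaviness to control the spectral norm of perturbations localized near $L$. The starting point is the observation that for $\phi \in \Ham_L(M,\omega)$, iterating $\phi$ does not help close orbits \emph{inside} $L$ unless $\phi|_L$ has periodic points; instead one should perturb $\phi$ by a Hamiltonian supported in a small neighborhood $U$ of $L$, producing a diffeomorphism $\phi'$ that agrees with $\phi$ on $L$ but has a genuine periodic point in $U$. The mechanism for creating such a periodic point while keeping $\gamma(\phi^{-1}\phi')$ small should be the standard one: heaviness of $L$ forces $\gamma$ to be bounded below by the \emph{oscillation} of any Hamiltonian that is nonconstant on $L$, but Hamiltonians supported away from a heavy set — or more precisely, whose time-one maps displace appropriately — have small $\gamma$; combined with the hypothesis that $(M,\omega)$ is bounded (so $\gamma \le C$ globally), one gets that long iterates $(\phi')^k$ stay within bounded $\gamma$-distance of $\phi^k$. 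The key quantitative input, which I expect the authors to have isolated as a separate proposition, is: \emph{if $\phi \in \Ham_L$ has no periodic point of period $\le k$ in a neighborhood $U$ of $L$, then the $\gamma$-distance from $\phi^k$ to the identity-type reference grows, contradicting boundedness once $k$ is large}. This is the higher-dimensional analogue of the Irie/Edtmair-type spectral-invariant arguments, with heaviness playing the role that the symplectic-homology/PFH spectral gap plays on surfaces.

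Concretely, I would proceed as follows. First, reduce both items to a single density statement: for every nonempty open $U \supset$ (a neighborhood of a point of $L$) and every $\varepsilon>0$, the set of $\psi \in \Ham_L(M,\omega)$ (resp.\ $\Ham_{L,\psi}$) admitting a periodic point in $U$ whose empirical measure is $\varepsilon$-close (in a fixed metric on probability measures) to some invariant measure supported on $L$ is $C^\infty$-open and dense; then intersect over a countable basis of such $U$ and a sequence $\varepsilon_n \to 0$ and apply Baire. Openness is the soft part — having a periodic orbit in an open set with a given empirical measure is a stable, finite-jet condition. For density, take any $\psi$ and any target invariant measure $\mu$ on $L$; approximate $\mu$ by the empirical measure of a long \emph{pseudo-orbit} of $\psi|_L$ that nearly closes up (possible because minimality/recurrence on $L$, or just the ergodic theorem on $L$, provides orbit segments equidistributed to $\mu$), then perturb $\psi$ in a tiny tube around that orbit segment to genuinely close it up into a periodic orbit, keeping the perturbation $C^\infty$-small; here one invokes the quantitative closing step, where heaviness guarantees the needed perturbation can be taken small in $\gamma$ and hence — by an argument controlling the $C^\infty$ size once the support is a thin tube — $C^\infty$-small. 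The constraint $\phi'|_L = \psi|_L$ in item (2) is arranged by choosing the perturbation Hamiltonian to vanish to infinite order along $L$.

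The main obstacle, and where I would spend the most care, is the interface between $\gamma$-smallness and $C^\infty$-smallness of the closing perturbation. Heaviness and boundedness give control in the spectral norm, but the closing lemma must produce a genuinely $C^\infty$-small perturbation; bridging these requires that the perturbation creating the periodic orbit be supported in a neighborhood whose $\gamma$-capacity is small \emph{and} that on such thin neighborhoods $C^\infty$-smallness is not in tension with closing up an orbit segment. On surfaces this is handled by the flexibility of area-preserving maps; in higher dimensions the authors presumably exploit that near a heavy Lagrangian one can work in Weinstein coordinates $T^*L$ and perform the closing inside a fiber direction, where the relevant perturbation is a $C^\infty$-small fiberwise shift whose spectral cost is governed by heaviness of $L$. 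A secondary technical point is ensuring the empirical measures of the created orbits genuinely converge to an invariant measure \emph{on} $L$ rather than merely near it: this follows because the orbits are confined to shrinking tubes around $L$ and any weak-$*$ limit of measures supported in shrinking tubes, being invariant, is supported on $L$ itself — provided one also knows (from the closing construction) that the periods go to infinity so that the limit is nontrivially invariant.
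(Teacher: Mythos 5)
Your outer Baire-category scaffolding matches the paper: define sets $\mathcal{G}_n$ of maps admitting a non-degenerate periodic point whose empirical measure puts mass $>1-1/n$ in a shrinking neighborhood $U_n$ of $L$, note these are $C^\infty$-open, prove density, and intersect. Your observation that openness is soft is also correct. But the heart of the matter is the density step, and there your mechanism is not the paper's and, more importantly, has a genuine gap.

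You propose to establish density by a classical closing construction: choose an orbit segment of $\psi|_L$ that nearly returns (via recurrence or the ergodic theorem on $L$), then ``perturb $\psi$ in a tiny tube around that orbit segment to genuinely close it up into a periodic orbit, keeping the perturbation $C^\infty$-small.'' This is precisely the step that is \emph{not} available in the $C^\infty$ category. Pugh's closing technique works in $C^1$ because one can use a long, low-norm perturbation box; for $C^r$ with $r\geq 2$ the ``box'' argument breaks down (Herman's counterexample, which the paper itself invokes in Section~2.7, shows the construction can genuinely fail). Asserting that heaviness lets you ``take the perturbation small in $\gamma$ and hence \ldots{} $C^\infty$-small'' reverses the logic: $\gamma$-smallness of a closing perturbation never implies $C^\infty$-smallness, and the paper does not attempt any such passage. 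So as written, your density argument begs the very question that makes the $C^\infty$ closing lemma hard.

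The paper's actual mechanism is of a different nature. One fixes a non-negative autonomous $G$ supported near $L$, equal to $\max G$ on $L$, with $C^\infty$-small norm chosen up front, and considers the one-parameter family $\varphi_G^s\psi$. The Floer input is a mean-value argument along the family $H_s = (s\tilde G_k)\sharp F_k$ (Lemma~\ref{lem:derivative}): for some $s$, the derivative bound $c(H_1)-c(H_0)\leq \int_{x_s}\partial_s H_s$ holds with $x_s$ a one-periodic orbit of $H_s$, hence $x_s(0)$ a $k$-periodic point of $\varphi_G^s\psi$. Lemma~\ref{lem:support} identifies $\int_{x_s}\partial_s H_s$ with $k\int_M G\,d\mu_{x_s(0)}$, and Lemma~\ref{lem:heavy} (this is where heaviness enters, via a lower bound on $c(\tilde G_k)$) shows $c(H_1)-c(H_0) \geq T_\psi^k(L,V)\max(G) - \gamma(F_k)$. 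Boundedness of $\gamma$ then makes the error term uniform in $k$. The upshot is that one does not design a closing perturbation and check that it is $C^\infty$-small; rather, a predetermined $C^\infty$-small family $\{\varphi_G^s\}_s$ is shown to contain, for \emph{some} $s$ one cannot choose, a time-one map producing a periodic orbit whose empirical measure is forced, by the spectral-invariant inequality, to be concentrated on $\operatorname{supp}(G)$. This is the crucial idea your proposal is missing. (Two smaller inaccuracies: the created orbits are \emph{not} confined to shrinking tubes — only their recurrence frequency is controlled — so your ``confinement'' justification of the limit measure being on $L$ should be replaced by the recurrence-frequency criterion of Section~\ref{sec:measures}; and $\varphi_G^s|_L=\mathrm{Id}$ is automatic because $G$ is constant on $L$, so there is no need to demand vanishing to infinite order along $L$.)
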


Convergence of a sequence of periodic orbits to a measure is recalled in Section \ref{sec:measures}.  As we explain in Section \ref{sec:Herman-heaviness}, the boundedness assumption appears to be essential.  The rationality assumption is likely removable; we comment on this in Remark \ref{rem:rational}.  Finally, although heaviness is crucial for our arguments we do not know whether the conclusion of the theorem holds without this assumption.

Corollary \ref{cor:cotangent} is a consequence of the above theorem combined with the fact that a tubular neighborhood of the zero section $T_0 \subset T^*\T^n$ can be symplectically embedded in $\CP^n$ (which is bounded) such that the image of $T_0$ in $\CP^n $ is a heavy Lagrangian submanifold -- the Clifford torus.

\begin{rem} \label{rem:first remark} A few remarks concerning 
Theorem \ref{thm:closing_near_heavy} are in order.  
\begin{itemize}
    \item[i.] Suppose that $L$ is a smooth Lagrangian.  Then,  $\Ham_L(M,\omega)$ contains the infinite dimensional group $\Diff_0(L)$, consisting of all diffeomorphisms of $L$ isotopic to identity, as a subgroup.  Indeed, it is a classical fact that  for any diffeomorphisms $f\in \Diff_0(L)$ there exists $\psi \in \Ham(M, \omega)$ which is supported in a (Weinstein) neighborhood of $L$ satisfying $\psi|_L = f$.  This provides a rich class of examples, some of which we examine more closely here. 

    \item[ii.] It is instructive to consider examples of $\psi \in \Ham_L(M,\omega)$ such that the dynamics of $\psi|_L$ is qualitatively very different than that of a conservative diffeomorphism. Observe that, in general, the closure of the periodic points of a generic $\psi'$ from Theorem \ref{thm:closing_near_heavy} intersects non-trivially with $L$. However, these periodic points do not necessarily accumulate at a periodic point on $L$. For instance, this is the case when the restriction of the original map $\psi|_L$ has no periodic points.
    
    \item[iii.] Another interesting scenario to consider is when $\psi|_L$ is uniquely ergodic.  In this case, the newly created periodic points have no choice but to converge to that unique invariant measure, yielding a statement akin to the ergodic closing lemma of Ma\~n\'e, albeit in a very simple setting.  It would be interesting to see whether one can design a smooth perturbation $\psi'$ that has periodic points approximating an a priori given (ergodic) measure of $\psi|_L$. 

    \item[iv.]  Suppose that $\psi|_L$ is minimal, i.e.\ every orbit of $\psi|_L$ is dense, for example, an  irrational rotation of a torus. In this case $L$ is contained in the closure of the periodic orbits of a generic perturbations $\psi'$  from Theorem \ref{thm:closing_near_heavy}. To see this, recall that, in general, the intersection between $L$ and the closure of the periodic points of a generic $\psi'$ from Theorem \ref{thm:closing_near_heavy} is non-empty. Then, since $\psi'\vert_L$ is  minimal, the intersection must be equal to $L$ because any closed invariant set which intersects $L$ must contain $L$.  We note that in this case, a statement in the spirit of the original $C^\infty$ closing lemma holds near $L$: For every open set $U$ that intersects $L$, there is a $C^\infty$-small perturbation of $\psi$ with a periodic point in $U$.
\end{itemize}
\end{rem}

Like most genericity statements, Theorem \ref{thm:closing_near_heavy} involves an application of the Baire category theorem.  The crucial ingredient of the argument, where Floer theory enters the picture, requires proving density of those elements in $\Ham_L(M,\omega)$ which have periodic orbits near $L$.  To that end, for a fixed $\psi \in \Ham_L(M,\omega)$, we consider a perturbation of the form $\varphi^s_G \psi$ where $G$ is a carefully chosen autonomous Hamiltonian.  On our way to proving the aforementioned density statement, we show that for a {\it full measure} set of parameters $s$, the map $\varphi^s_G \psi$ has periodic orbits satisfying the conclusion of Theorem \ref{thm:closing_near_heavy}.  

\begin{thm}\label{thm:closing_near_heavy-2}
Suppose that $(M, \omega)$ is rational and bounded, $L \subset M$ is heavy and that $\psi \in \Ham_L(M,\omega)$. Let $U$ be an open neighborhood of $L$.   There exists a Hamiltonian $G: M \rightarrow \R$, supported in $U$, such that $\varphi^s_G|_L = \mathrm{Id}$ and, moreover, for almost every $s \in [0,1]$ the composition $\varphi^s_G \psi$ possesses a sequence of periodic points converging to an invariant measure supported on $L$.  
\end{thm}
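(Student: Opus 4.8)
The plan is to build the perturbing Hamiltonian $G$ so that it is supported in $U$, vanishes on $L$ together with its differential (so that $\varphi^s_G|_L = \mathrm{Id}$ for all $s$), and is genuinely "spread out" transversally to $L$ inside $U$. The guiding principle is the Entov--Polterovich machinery: heaviness of $L$ forces the partial symplectic quasi-state (or the relevant spectral invariant) associated with $L$ to pick up the values of $G$ on $L$, hence $\zeta(G)=0$ because $G|_L=0$; meanwhile boundedness of $(M,\omega)$ controls the spectral pseudo-norm $\gamma$. First I would recall the link between periodic points and the spectral invariants of the iterates: if $(\varphi^s_G\psi)^k$ has no periodic points whose orbit enters a prescribed small neighborhood, then one obtains uniform bounds on certain spectral numbers of the iterates, which combined with the boundedness assumption yields a contradiction with the growth (or lack of decay) coming from heaviness. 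This is the mechanism already used in Theorem \ref{thm:closing_near_heavy}; here the extra content is the "almost every $s$" statement, which is where the one-parameter family $s\mapsto \varphi^s_G\psi$ and a monotonicity/differentiability argument in $s$ enter.

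The key steps, in order, would be: (1) Construct $G$: choose a Weinstein-type neighborhood structure near $L$ inside $U$, and take $G$ to be an autonomous Hamiltonian vanishing to second order along $L$ whose Hamiltonian flow $\varphi^s_G$ is the identity on $L$ but moves a full-measure set of nearby points; the precise shape of $G$ should be dictated by making the function $s\mapsto c(\varphi^s_G\psi)$ (an appropriate spectral invariant) strictly monotone, or at least non-constant on every subinterval, wherever no periodic orbit has appeared. (2) Set up the spectral/quasi-state estimate: using heaviness of $L$, show that the partial quasi-state $\zeta_L$ (or the spectral invariant relative to $L$) of $\varphi^s_G\psi$ is pinned down by the behavior near $L$, and using boundedness of $(M,\omega)$, show that $\gamma(\varphi^s_G\psi)$ stays uniformly bounded in $s\in[0,1]$. (3) Run the Fathi-type / Herman-type argument on the one-parameter family: since $s\mapsto c_+(\varphi^s_G\psi) - c_-(\varphi^s_G\psi)$ is monotone-ish and bounded, it is differentiable for almost every $s$ with summable increments; at a point $s$ of differentiability where the derivative is controlled, the absence of periodic orbits entering the small neighborhood would force the spectral gap to either grow without bound or collapse in a way incompatible with the Lipschitz/monotone behavior — hence for a.e.\ $s$ a periodic orbit must enter every prescribed neighborhood of $L$. (4) Diagonalize over a countable basis of neighborhoods of $L$ and over iteration lengths to extract, for a.e.\ $s$, an actual sequence of periodic points of $\varphi^s_G\psi$ converging to an invariant measure supported on $L$; the measure is obtained as a weak-$*$ limit of the uniform measures on these periodic orbits, and invariance is automatic while support on $L$ follows since the orbits are eventually contained in arbitrarily small neighborhoods of $L$.

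I would isolate two places as the main obstacles. The first, and most serious, is step (3): making the "almost every $s$" claim rigorous requires a quantitative relationship — of the type Fathi used for the Calabi invariant, and as appears in the smooth closing lemma literature on surfaces — between the failure of the closing property on an interval of parameters and a definite amount of variation of a spectral quantity, so that a bounded monotone function cannot vary by that amount on a positive-measure set of "bad" parameters. The cleanest route is probably to show that $\frac{d}{ds}$ of the relevant spectral invariant equals an integral of $G$ against an invariant measure of $\varphi^s_G\psi$, and that if no short periodic orbit visits the neighborhood then this integral is bounded away from zero, contradicting summability of the total variation; getting this differentiation formula and its sign is the technical heart. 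The second obstacle is the construction in step (1): $G$ must simultaneously be supported in $U$, fix $L$ pointwise, and have a flow rich enough that the derivative formula in step (3) is nonzero — reconciling "$\varphi^s_G|_L=\mathrm{Id}$" with "$G$ detects a measure charging a neighborhood of $L$" needs $G$ to vanish on $L$ but be, say, strictly negative (or positive) on a punctured neighborhood, and one must check this is compatible with the heaviness-based lower bound on $\zeta_L(G)$. Everything else — the Baire-category packaging (deferred to Theorem \ref{thm:closing_near_heavy}), the weak-$*$ compactness, invariance of the limit measure — is routine.
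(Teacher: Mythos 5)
Your step (3) correctly identifies the technical heart of the argument — the derivative of the spectral invariant along the one‐parameter family equals an integral of the perturbing function against a periodic orbit measure, and a bounded quantity cannot have a large total variation — and this is indeed what the paper does (Lemma \ref{lem:derivative} combined with Lemma \ref{lem:support}, packaged as Theorem \ref{thm:precise1}(ii)). However, your step (1) contains a sign error that breaks the whole chain, and step (2) inherits it.

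You construct $G$ to vanish on $L$ (to second order), and you want heaviness to give $\zeta(G)=0$. But heaviness only gives the \emph{lower} bound $\inf_{x\in L}G(x)\leq\zeta(G)$; with $G|_L=0$ this reads $\zeta(G)\geq 0$, which is useless — there is no growth to play off against boundedness of $\gamma$, so the contradiction you hope for never materializes. (To get $\zeta(G)\leq 0$ as well you would need super-heaviness, which the paper does not assume, and even then $\zeta(G)=0$ gives no leverage.) The paper does the opposite: it takes $G\geq 0$ equal to its \emph{maximum} precisely on $L$ (which still forces $dG=0$ on $L$, hence $\varphi^s_G|_L=\mathrm{Id}$). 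Then heaviness yields $c(G)\geq\zeta(G)\geq\max(G)>0$, and after iterating this gives $c(\tilde G_k)\geq k\cdot\max(G)$ (Lemma \ref{lem:heavy}), a linearly growing lower bound that is incompatible with the boundedness of $\gamma(\psi^k)$ unless the derivative term $\int G\,d\mu^{\varphi^s_G\psi}_x$ is close to $\max(G)$. That last inequality, $\frac{1}{\max(G)}\int G\,d\mu\to 1$, is then exactly what pins the limit measure to $L=G^{-1}(\max G)$. With your choice the analogous quantity would be $\int G\,d\mu\to 0$, which only locates the measure on $\{G=0\}$, a set much larger than $L$ (it contains the entire complement of $\supp G$), so you cannot conclude support on $L$. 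In short: $G$ must be \emph{maximal} on $L$, not zero on $L$, both for the heaviness input and for the conclusion about the support of the limit measure; this is the missing piece, and once it is in place the rest of your outline lines up with the paper's argument.
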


\subsection{Recurrent Lagrangians}
In this section, we present a general quantitative result, in the spirit of the closing lemma, which does not assume the invariance of the heavy set $L$ or boundedness of $(M, \omega)$. In the statement below $\gamma$ stands for the spectral pseudo-norm for the sub-additive spectral invariant for which $L$ is heavy; see Sections \ref{sec:spec-norm} and \ref{sec:heaviness}.  

We introduce some notation before stating our result. For a map $f \colon X \to X$ and two subsets $A, B \subset X$, we denote  the count of events $f^i(A) \subset B$ for $i \leq k$ by $T^k_f(A,B)$. In other words,
\begin{align}\label{eqn:event count}
T^k_f(A,B):= \# \{ 1 \leq i \leq k \, \vert \,  f^i(A) \subset B \} \leq k.
\end{align}
Note that if $A \subset B$ and $A$ is invariant under $f$, then $T^k_f(A,B) =k $. 

\begin{thm}
\label{thm:recurrent}
 Suppose that $(M, \omega)$ is rational and $L \subset (M, \omega)$ is heavy. Let $\psi \in \Ham(M,\omega)$, $V \subset M$ be an open neighborhood  of $L$ and $G \colon M \to \R$ be a non-negative autonomous Hamiltonian that is equal to its maximum on $V$, that is, $G\vert_V \equiv \max(G)$.  Then, for all $k\in \N$, there exists $s\in [0,1]$ such that the composition $ \varphi^s_{G}\psi$ has a $k$-periodic point $x$ satisfying 
    \[ 
       T_{\psi}^k(L, V) - T^k_{\varphi^s_{G}\psi} (x, \supp (G))\leq   \gamma(\psi^k)/ \max(G).
    \]
In particular, when $(M,\omega)$ is bounded, the error term $\gamma(\psi^k)/\max(G)$ can be chosen independent of $k \in \N$.
\end{thm}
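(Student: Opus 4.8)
The plan is to exploit the subadditivity and spectrality of the spectral invariant together with the variational behavior of spectral invariants under composition with an autonomous flow $\varphi^s_G$, much in the spirit of the "min--max as a function of $s$" arguments that underlie quantitative closing lemmas. Fix $k\in\N$ and consider the family of Hamiltonian diffeomorphisms $\varphi^s_G\psi$ for $s\in[0,1]$, and its $k$-th iterate $(\varphi^s_G\psi)^k$. The first step is to record the key monotonicity input: because $G\ge 0$, the spectral invariant of $\varphi^s_G\psi$ (with respect to the fundamental class, say, or whichever class realizes heaviness of $L$) is monotone in $s$, and more precisely one has a lower bound on its growth in terms of how much mass of the relevant orbit spends time in the region $V$ where $G\equiv\max(G)$. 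Concretely, I would differentiate (in the sense of a Lipschitz function of $s$) the spectral invariant $c\big((\varphi^s_G\psi)^k\big)$ and bound its derivative below by a sum of contributions $\max(G)$, one for each index $i\le k$ at which the $i$-th iterate of the relevant Lagrangian-representing object lands inside $V$; summing from $s=0$ to $s=1$ converts this into the inequality
\[
c\big((\varphi^1_G\psi)^k\big) - c\big(\psi^k\big) \;\ge\; \max(G)\cdot T^k_\psi(L,V)
\]
(possibly with the heaviness of $L$ used to guarantee that the orbit of $L$ genuinely ``sees'' the full value $\max(G)$ on $V$).

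The second step is to run the same estimate from the point of view of an actual periodic point rather than the Lagrangian. If $\varphi^s_G\psi$ happens to have no $k$-periodic point in $\supp(G)$ at all for some $s$, then on the complement the flow $\varphi^s_G$ contributes nothing to the action, and the spectral invariant stays essentially pinned; the genuine mechanism is that as $s$ increases the spectral value must jump, and spectrality forces these jumps to be realized by actions of $k$-periodic orbits. Choosing $s\in[0,1]$ at (or near) a value where the Lipschitz function $s\mapsto c\big((\varphi^s_G\psi)^k\big)$ has accumulated its increase, spectrality produces a $k$-periodic point $x$ of $\varphi^s_G\psi$ whose action equals the spectral value; bounding that action from above by $c(\psi^k)$ plus the total contribution $\max(G)\cdot T^k_{\varphi^s_G\psi}(x,\supp(G))$ that the autonomous piece can add along the orbit of $x$, and combining with the lower bound from Step 1, yields
\[
\max(G)\cdot T^k_\psi(L,V) \;\le\; c(\psi^k) \;+\; \max(G)\cdot T^k_{\varphi^s_G\psi}(x,\supp(G)),
\]
which after dividing by $\max(G)$ and replacing $c(\psi^k)$ by $\gamma(\psi^k)$ (using $c\le\gamma$, or the appropriate normalization of the pseudo-norm) is exactly the claimed inequality. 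The final sentence of the theorem is then immediate: if $(M,\omega)$ is bounded then $\gamma(\psi^k)\le\gamma(\psi)\cdot$const is replaced by a uniform bound $D$ on $\gamma$ over all of $\Ham(M,\omega)$, so the error $\gamma(\psi^k)/\max(G)\le D/\max(G)$ does not depend on $k$.

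I expect the main obstacle to be Step 1 --- making rigorous the claim that each passage $\varphi^i_\psi(L)\subset V$ contributes a full $\max(G)$ to the growth of the spectral invariant of the $k$-th iterate. This requires the right ``iterated'' or ``product'' formulation: one must work with spectral invariants of $(\varphi^s_G\psi)^k$ and track how the $k$ insertions of the autonomous factor $G$ interact with the heaviness of $L$, presumably via the Entov--Polterovich machinery (partial quasi-states / the inequality $\zeta(G)\ge\max_L G$ for heavy $L$) applied iteratively, or via a spectral-invariant analogue of the triangle inequality applied $k$ times while carefully accounting for which iterates of $L$ lie in $V$. The bookkeeping that turns ``heaviness along the orbit'' into precisely the combinatorial count $T^k_\psi(L,V)$, and the parallel bound from above by $T^k_{\varphi^s_G\psi}(x,\supp(G))$ for the chosen periodic point, is where the real work lies; the passage from the $s$-family estimate to the existence of the periodic point is a standard spectrality argument once the monotonicity is in hand.
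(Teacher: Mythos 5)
Your overall architecture is right: a lower bound on the increase $c(H_1)-c(H_0)$ in terms of $T^k_\psi(L,V)$, an upper bound in terms of an action integral along a $k$-periodic orbit (hence $T^k_{\varphi^s_G\psi}(x,\supp(G))$), and spectrality/derivative bookkeeping to produce the orbit. However, your Step~1 as stated would not work. The derivative of $s\mapsto c(H_s)$ (for a generic family of generating Hamiltonians) is $\int_{x_s}\partial_s H_s$, where $x_s$ is a one-periodic orbit of $H_s$ — i.e.\ a $k$-periodic orbit of $\varphi^s_G\psi$, which a priori has nothing to do with $L$ or its $\psi$-iterates. There is no way to lower bound this derivative by $\max(G)\cdot \#\{i: \psi^i(L)\subset V\}$, because the orbit $x_s$ could live entirely outside $\supp(G)$. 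Your backup suggestion — an iterated triangle inequality — also fails, because subadditivity ($c(H\sharp G)\le c(H)+c(G)$) goes the wrong way: it gives upper bounds, not the lower bound you need. The paper obtains the lower bound by a completely different mechanism: one forms the concatenated Hamiltonian $\tilde G_k := G\circ\psi\,\sharp\,G\circ\psi^2\,\sharp\cdots\sharp\,G\circ\psi^k$ (which together with a Hamiltonian $F_k$ generating $\psi^k$ produces $(\varphi^s_G\psi)^k$ via $H_s=(s\tilde G_k)\sharp F_k$), and applies heaviness of $L$ \emph{for time-dependent Hamiltonians} (Lemma~\ref{lem:heavy-time-dependent}) directly to $\tilde G_k$, yielding $c(\tilde G_k)\ge \int_0^1\min_{x\in L}\tilde G_k(t,x)\,dt$. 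Since $\tilde G_k(t,\cdot)=k\,G(kt,\psi^j(\cdot))$ on $t\in[\tfrac{j-1}{k},\tfrac{j}{k}]$, each $j$ with $\psi^j(L)\subset V$ contributes a full $\max(G)$ to the time integral; this is exactly where the count $T^k_\psi(L,V)$ comes from. No derivative in $s$ enters at this step — heaviness is applied once, to a single time-dependent Hamiltonian.

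A second, smaller issue: you write the error as $c(\psi^k)$ and propose to replace it by $\gamma(\psi^k)$ ``using $c\le\gamma$.'' That is not the mechanism, and $c(\psi^k)$ is not even well-defined on $\Ham$ (it depends on the generating Hamiltonian, and without mean-normalization it may be negative, which would ruin the sign of the estimate). The pseudo-norm enters via the identity $c(\tilde G_k)=c(\tilde G_k\sharp F_k\sharp \bar F_k)\le c(\tilde G_k\sharp F_k)+c(\bar F_k)$, so that $c(\tilde G_k\sharp F_k)-c(F_k)\ge c(\tilde G_k)-\gamma(F_k)$, and then one takes the infimum over Hamiltonians $F_k$ generating $\psi^k$ to get $\gamma(\psi^k)$. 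This is what makes the error term both non-negative and, under boundedness, uniformly small — your last sentence about the bounded case is otherwise correct. Once you replace your Step~1 with the heaviness-of-$\tilde G_k$ argument and route the error through $\gamma(F_k)$ as above, the rest of your plan (Lemma~\ref{lem:derivative}-style derivative estimate to produce the orbit, upper bound by $\max(G)\cdot T^k_{\varphi^s_G\psi}(x,\supp(G))$) closes the argument.
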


Examining the statement above in the case where $(M,\omega)$ is bounded, we see that if $L$ is recurrent for $\psi$, with respect to the Hausdorff distance, then $C^\infty$-small perturbations of $\psi$ create periodic points which visit any neighborhood of $L$ with frequency bounded from below by the recurrence frequency of $L$.

In Section \ref{sec:pf_thm:precise1}, we prove Theorem \ref{thm:precise1} which generalizes both of Theorems \ref{thm:closing_near_heavy-2} \& \ref{thm:recurrent} and also provides a lower bound on the measure of the set of parameters $s\in [0,1]$ for which the above inequality holds.

\subsection{Application to dynamics of pseudo-rotations}
     
 Let $\psi$ be a pseudo-rotation of $(\C P^n, \omega_{FS})$, where $\omega_{FS}$ denotes the Fubini-Study symplectic form. In other words, we suppose that $\psi \in \Ham(\C P^n, \omega_{FS})$ has exactly $n+1$-periodic points, which are then necessarily fixed points.  It was shown in \cite{GG} that no fixed point of a pseudo-rotation is isolated as an invariant set. This is a higher dimensional analog of the celebrated Le Calvez--Yoccoz theorem \cite{LCY}. The invariant case of Theorem \ref{thm:recurrent} has an application of this nature.

\begin{cor}
\label{cor:pseudo-rotation}
A closed invariant heavy set $L \subset \CP^n \setminus \Fix(\psi)$ of a Hamiltonian pseudo-rotation $\psi$ of $\CP^n$ cannot be isolated as an invariant set of $\psi$. 
\end{cor}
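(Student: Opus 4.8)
The plan is to derive Corollary~\ref{cor:pseudo-rotation} from the invariant case of Theorem~\ref{thm:recurrent} by contradiction. Suppose $L \subset \CP^n \setminus \Fix(\psi)$ is a closed invariant heavy set which \emph{is} isolated as an invariant set: then there is an open neighborhood $V$ of $L$ whose closure is disjoint from $\Fix(\psi)$ and which contains no invariant set of $\psi$ other than $L$. Since $L$ is invariant and $L \subset V$, we may choose a non-negative autonomous Hamiltonian $G \colon \CP^n \to \R$ with $\supp(G)$ contained in a slightly smaller neighborhood $V' \Subset V$ that still contains $L$, and with $G \equiv \max(G) > 0$ on an open neighborhood of $L$. (Shrinking $V'$ further if necessary, we also keep $\overline{\supp(G)}$ disjoint from $\Fix(\psi)$.) Because $L$ is invariant under $\psi$ and $L \subset V'$, for each $k$ we have $T^k_\psi(L, V') = k$, where I use $V'$ in the role of the open set ``$V$'' of Theorem~\ref{thm:recurrent}.

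Now apply Theorem~\ref{thm:recurrent} with this data. Since $\CP^n$ is bounded (it is toric), $\gamma(\psi^k) \le C$ for a constant $C$ independent of $k$, so the theorem produces, for every $k$, a parameter $s_k \in [0,1]$ and a $k$-periodic point $x_k$ of $\varphi^{s_k}_G \psi$ with
\[
k - T^k_{\varphi^{s_k}_G \psi}\bigl(x_k, \supp(G)\bigr) \;\le\; C/\max(G).
\]
Thus, along the orbit of $x_k$, the proportion of iterates $i \le k$ for which $(\varphi^{s_k}_G\psi)^i(x_k) \in \supp(G) \subset V'$ tends to $1$ as $k \to \infty$: the orbit of $x_k$ spends almost all of its time inside $V'$. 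The next step is to extract a measure-theoretic limit. Let $\mu_k$ be the invariant probability measure of $\varphi^{s_k}_G\psi$ equidistributed along the orbit of $x_k$ (as in Section~\ref{sec:measures}); after passing to a subsequence we may assume $\mu_k \to \mu$ weakly and $s_k \to s_\infty$. The inequality above gives $\mu_k(\overline{V'}) \ge 1 - C/(k\max(G)) \to 1$, hence $\mu(\overline{V'}) = 1$, i.e. $\mu$ is supported in $\overline{V'} \subset V$; moreover $\mu$ is invariant under the limit map $\varphi^{s_\infty}_G\psi$.

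The concluding step is to show $s_\infty = 0$, so that $\mu$ is $\psi$-invariant, and then derive the contradiction. For $s_\infty = 0$: the perturbation $\varphi^s_G\psi$ differs from $\psi$ only on $\supp(G)$, where $\psi$ has no fixed points; one checks that for $s>0$ the composition $\varphi^s_G\psi$ has \emph{no} recurrent dynamics lingering inside $\supp(G)$ in a way compatible with $\mu(\overline{V'})=1$ unless $s=0$ — more precisely, by choosing $G$ generically (or invoking the almost-everywhere statement in Theorem~\ref{thm:precise1}) and using that $\overline{\supp(G)} \cap \Fix(\psi)=\emptyset$, one rules out a nontrivial invariant measure of $\varphi^s_G\psi$ supported in $\overline{V'}$ for $s \ne 0$; alternatively, one argues directly that the support of $\mu$ is a closed $\varphi^{s_\infty}_G\psi$-invariant subset of $V$ and, since on $V \setminus \supp(G)$ the two maps agree while $\mu$ gives full mass to a $\psi$-compatible region, $\supp(\mu)$ is in fact $\psi$-invariant. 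Either way, $\supp(\mu)$ is a nonempty closed $\psi$-invariant subset of $V$, hence by the isolation hypothesis $\supp(\mu) = L$. But then the periodic points $x_k$ (or points arbitrarily close to $L$ in their orbits) show that $L$ is accumulated by invariant sets of the perturbed maps that converge to $\psi$; chasing this back, one produces points of $\CP^n \setminus L$, arbitrarily close to $L$, whose $\psi$-orbits remain in $V$ for arbitrarily long times, contradicting that $L$ is isolated as an invariant set of $\psi$ itself. I expect the main obstacle to be exactly this last transfer of information from the perturbed maps $\varphi^{s_k}_G\psi$ back to $\psi$ — controlling the parameter $s_k$ and ensuring the limiting invariant measure is genuinely $\psi$-invariant and not merely $\varphi^{s_\infty}_G\psi$-invariant; handling this cleanly is where the precise almost-everywhere formulation of Theorem~\ref{thm:precise1}, together with the disjointness $\overline{\supp(G)} \cap \Fix(\psi) = \emptyset$, does the real work.
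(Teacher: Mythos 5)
Your approach has a genuine gap: it never uses the $\gamma$-rigidity of pseudo-rotations, and without that input the argument does not reach a contradiction.

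You apply Theorem~\ref{thm:recurrent} with only the boundedness of $\gamma$ on $\CP^n$, getting $k$-periodic points $x_k$ of $\varphi^{s_k}_G\psi$ whose orbit measures $\mu_k$ spend a proportion $1-O(1/k)$ of their time in $\supp(G)$. Passing to a weak-$*$ limit, you do get a measure $\mu$ supported in $\supp(G)\subset V$ that is $\varphi^{s_\infty}_G\psi$-invariant, and one can in fact sharpen this: since $\int G\,d\mu_k \geq \max(G)(1-\gamma(\psi^k)/(k\max G))\to\max(G)$, the limit satisfies $\int G\,d\mu=\max(G)$, so $\mu$ is supported on $G^{-1}(\max G)$, where $dG=0$ and hence $\varphi^{s_\infty}_G=\mathrm{Id}$; using that $\varphi^t_G$ preserves level sets of $G$, one checks $\varphi^{s_\infty}_G\psi=\psi$ on $\psi^{-1}(G^{-1}(\max G))$, so $\mu$ really is $\psi$-invariant. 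But that is \emph{no contradiction}: $\psi$ certainly has invariant measures supported on the invariant set $L$, and by isolation $\supp(\mu)\subset L$ is perfectly consistent. Your proposed escape routes do not work: the claim that $\varphi^s_G\psi$ for $s\neq 0$ admits no invariant measure supported in $\overline{V'}$ is false (any $\psi$-invariant measure on $L$ is one), and the final ``chasing back'' to produce $\psi$-orbits of points in $V\setminus L$ that stay in $V$ is unjustified — the orbits you have are orbits of the perturbed maps $\varphi^{s_k}_G\psi$, which need not converge to $\psi$.

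The paper's proof is built on two ingredients you are missing. First, pseudo-rotations of $\CP^n$ are $\gamma$-rigid \cite{GG,JS}: there exist $k_i\to\infty$ with $\gamma(\psi^{k_i})\to 0$. Rescaling $G\mapsto\epsilon G$ and choosing $k$ with $\gamma(\psi^k)/(\epsilon\max G)<1/2$, the error in Theorem~\ref{thm:precise1}(i) drops below $1$, so — since $T^k$ is an integer — the $k$-periodic orbit lies \emph{entirely} in $\supp(G)\subset V$, not just most of the time. Second, $G$ is chosen with $dG$ supported in an annulus $W=\overline{U_2}\setminus U_1$ between two neighborhoods of $L$, so that any periodic orbit of $\varphi^s_{\epsilon G}\psi$ that is not already a $\psi$-orbit must pass through $W$; since $\Per(\psi)=\Fix(\psi)$ is disjoint from $V$, the produced orbit is not a $\psi$-orbit and hence meets $W$. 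Meanwhile, isolation of $L$ gives a uniform exit time from $V$ for $\psi$-orbits through $W$, which persists for the $C^0$-small perturbation $\varphi^s_{\epsilon G}\psi$ — and this is incompatible with an orbit lying entirely in $V$. That incompatibility is the contradiction, and it takes place at the level of the perturbed map; no transfer back to $\psi$-dynamics is needed.
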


In fact, the above corollary holds more generally for $\gamma$-rigid maps; see Remark \ref{rem:rigidity}. It is worth mentioning that pseudo-rotations can have complicated dynamics. For instance, there are pseudo-rotations $\psi \in \Ham (\CP^n, \omega_{FS})$ which are uniquely ergodic on the complement of $\Fix(\psi)$; see \cite{FK, LeS}. We believe the Anosov-Katok construction \cite{LeS} can be used to provide similar examples with invariant heavy sets as in Corollary \ref{cor:pseudo-rotation}.

\subsection*{Acknowledgments} 
S.S. is grateful to Patrice Le Calvez for numerous helpful conversations on the ergodic closing lemma, to Cheuk Yu Mak \& Ivan Smith for helpful comments related to Section \ref{subsec:FH}.

E.\c{C}. \& S.S. are partially supported by ERC Starting Grant number 851701.

S.T. was partially supported by a grant from the Institute for Advanced Study School of Mathematics,  the Schmidt Futures program,  a research grant from the Center for New Scientists at the Weizmann Institute of Science and Alon fellowship.

\section{Preliminaries}
\label{sec:prelims}
In this section we introduce some of our notations and conventions and recall the necessary background from symplectic geometry.

 \subsection{Invariant measures} \label{sec:measures}
  Let $\psi:X \to X$ be a continuous mapping of a compact topological space $X$. Denote by $\mathcal{M}_\psi$ the space of Borel probability measures on $X$ which are invariant under $\psi$.  We equip $\mathcal{M}_\psi$ with the weak topology which can be characterized as follows: a sequence of probability measures $\mu_n$ converges to $\mu$ if $\int f \mu_n \to \int f \mu$ for all continuous $f:X \to \R$.  In this topology, $\mathcal{M}_\psi$ is compact.
  
  A periodic point of $\psi$, say $x$, determines an invariant measure  $\mu_x^\psi \in \mathcal{M}_\psi$ via the formula 
    $$ \mu_x^{\psi} := \frac{1}{k} \sum_{i=0}^{k-1} \delta_{\psi^i(x)},$$
    where $k$ is the period of $x$ and $\delta_y$ denotes the Dirac delta measure at $y$. When it is clear from the context, we drop the sup-script $\psi$ from the notation. 

    When saying that a sequence of periodic orbits converges to a measure, as we do throughout this paper, we are referring to the convergence of the measures associated to the periodic points.  
 
  Suppose that $L \subset X$ is closed and invariant under $\psi$.  Here is a criterion, which we will use in our arguments, for convergence of a sequence of periodic orbits to an invariant measure supported on $L$.   Given a periodic point $x$ of $\psi$ and an open set $U$, the recurrence frequency with which $x$ visits $U$ is defined to be the quantity

  $$ \rho_\psi(x, U) = \frac{\# \{ 0 \leq i \leq k-1 \, \vert \,  \psi^i(x) \in U\}}{k}.$$
  Let $U_1 \supset U_2 \supset \cdots  U_n \supset \cdots$ be a nested sequence of  open neighborhoods of $L$ such that  $ \cap_n U_n = L$. 
  Let $x_n$ be a sequence of periodic points of $\psi$.  If 
  $\rho_\psi(x_n, U_n) \to 1$, then the limit points of the sequence $\mu_{x_n}$ are invariant measures supported on $L$.

 \subsection{Hamiltonian diffeomorphisms and related notions} \label{sec:Ham diffeos}

Let $(M^{2n}, \omega)$ be a closed and connected symplectic manifold. Throughout the paper we assume that $(M^{2n}, \omega)$ is rational, i.e.\ the image $\omega(\pi_2(M)) \subset \R$ is a discrete subset.

Let $C^\infty ( \R/ \Z \times M)$ be the set of smooth one-periodic Hamiltonians on $M$. The Hamiltonian vector field $X_H$ of $H \colon \R/ \Z \times M \to \R$ is defined  by $\omega(X_H, \cdot) :=-dH.$ The time-dependent flow  $\varphi_H^t$, or isotopy, generated by $X_H$ is called the Hamiltonian flow of $H$. We denote its time-one map by $\varphi_H$. Such  time-one maps form the group of Hamiltonian diffeomorphisms $\Ham(M,\omega)$ of $(M, \omega)$. It is a subgroup of the group of symplectomorphisms $\Symp(M, \omega)$ of $(M, \omega)$.

The support $\supp(H) \subset M$ of a Hamiltonian $H \in C^\infty ( \R/ \Z \times M)$ is given by
$$
\supp(H) := \overline{ \{ x \in M \ \vert \ H(x,t) \neq  0 \ \text{for some} \ t \in \R/\Z\}}.
$$
We denote by $\| \cdot \|_H$ the Hofer norm, or oscillation, on $ C^\infty( \R/ \Z \times M)$, namely
 \[
\|H\|_H:= \int_0^1 \max_x H(t,x) - \min_x H(t,x) \ dt. \]

Let $x \colon \R/ \Z \to M$ be a smooth contractible loop. A capping of $x$ is a map $v \colon D^2 \to M$ such that $v\vert_{\partial D^2}=x$. The pair $(x,v)$ is referred to as a capped loop and we say that two capped loops $(x_1, v_1), (x_2, v_2)$ are equivalent, if $x_1 = x_2$ and the sphere $v_1 \sharp (-v_2) \in \ker (\omega) \cap \ker (c_1)$ where $c_1$ is the first Chern class of $TM$.

Let $\Omega$ be the space formed by equivalence classes of capped loops in $M$.  Given a Hamiltonian $H$, the associated action functional  $\mathcal{A}_H: \Omega \rightarrow \R$ is defined by
\[
\mathcal{A}_H (x,v) := -\int_v \omega + \int_0^1 H(t, x(t)) dt.
\]
The set of critical points $\Omega_H \subset \Omega$ of the action functional $\mathcal{A}_H$ consist of capped loops $(x, v)$ such that $x$ is a contractible  one-periodic orbit of $X_H$.  The set of critical values 
\[
\Spec(H):= \{\mathcal{A}_H (x,v)\ \vert \  (x,v) \in \Omega_H \}
\]
is called the action spectrum of $H$. 

A one-periodic orbit $x \colon \R/ \Z \to M$ of $X_H$ is called non-degenerate if the linearized return map   $$D\varphi_H \colon T_{x(0)}M \to T_{x(0)}M$$  has no eigenvalues equal to one. We say that the Hamiltonian $H \in C^\infty ( \R/ \Z \times M)$ is non-degenerate if all one-periodic orbits of $X_H$ are non-degenerate. Finally, we denote by $\Fix(\varphi)$ the set of fixed points of $\varphi \in \Ham(M, \omega)$ and by $\Per(\varphi)$ the set of all periodic points, i.e., 
\[
\Per(\varphi) := \cup_{k \in \N} \Fix(\varphi^k). 
\]

\subsection{Remarks on Hamiltonians and related operations}\label{sec:hamiltonians}
Recall that the universal cover $  \tHam(M, \omega)$ of $\Ham(M,\omega)$ consists of the set of all Hamiltonian isotopies $\varphi^t_H, t\in [0,1]$, considered up to homotopy with fixed endpoints.  We denote by $\tV_H$ the natural lift of $\varphi_H$ (given by the isotopy $\varphi_H^t$)  to the universal cover. Up to reparametrization, every Hamiltonian isotopy can be written as the flow of a Hamiltonian $H$ that vanishes for values of $t$ close to $0, 1$.  Note that reparametrization does not affect the lift to the universal covering.  We will often consider Hamiltonians that vanish near $t=0,1$. This will be helpful in defining the following concatenation operation. 

The concatenation $H_1 \sharp \cdots \sharp H_k$ of the Hamiltonians $H_1, \ldots, H_k$ is defined by 
\begin{equation*}\label{eq:sharp}
H_1 \sharp \cdots \sharp H_k(t, x) := kH_i(k (t -(i-1)/k), x) \quad \text{for}\quad t \in [(i-1)/k, i/k].
\end{equation*}
Note that $H_i(k (t -(i-1)/k), x)=H_i(k t, x)$ since our Hamiltonians are one-periodic in time.  The Hamiltonian isotopy generated by the concatenation $H_1 \sharp \cdots \sharp H_k(t, x)$ is simply (a reparametrization of) the concatenation of the Hamiltonian flows of $H_1, \ldots, H_k$. In particular, we have $\varphi_{H_1 \sharp \cdots \sharp H_k } = \varphi_{H_k} \circ \cdots \circ \varphi_{H_1}$.  Furthermore, the two Hamiltonian isotopies  $\varphi^t_{H_1 \sharp \cdots \sharp H_k }$ and  $\varphi^t_{H_k} \circ \cdots \circ \varphi^t_{H_1}$ are homotopic relative endpoints, that is,
$$ \tV_{H_1 \sharp \cdots \sharp H_k } = \tV_{H_k} \circ \cdots \circ \tV_{H_1}.$$

The inverse $\bar{H}$ of the Hamiltonian $H$ is defined by 
\[
\bar{H}(t,x):=-H(t, \varphi^t_H(x)).
\]
Its flow $\varphi^t_{\bar{H}}$ is given by $(\varphi^t_H)^{-1}$ and hence  $\tV_{\bar H}=\tV_H^{-1}$. For $\psi \in \Symp(M, \omega)$, we define the pull back Hamiltonian $H\circ \psi$ by 
$$H\circ \psi (t, x) := H(t, \psi(x)).$$  The corresponding Hamiltonian flow is given by
$\varphi^t_{H\circ \psi} := \psi^{-1} \varphi^t_{H} \psi$. It is not difficult to check that when $\psi = \varphi_K$, we have
\begin{equation*}
\label{eq:conjugation}
  \tV_{H \circ \varphi_K} =  \tV_{\bar{K}} \, \tV_{H} \,  \tV_K.
\end{equation*}

\subsection{Sub-additive spectral invariants}
\label{subsec:spec_inv}
We start with an axiomatic definition of spectral invariants. 
The axioms listed here hold for Hamiltonian Floer homology spectral invariants associated to an idempotent quantum (co)homology class, in the setting of rational symplectic manifolds. Recall the notations $H\sharp G, \bar{H}, H\circ \psi,  \tV, \|\cdot\|_H$ from Section \ref{sec:hamiltonians}. 

\begin{definition}
    A functional 
    $$c:C^\infty(\R/\Z\times M)\rightarrow \R$$
    is called an \emph{sub-additive spectral invariant} if it satisfies the following properties:
    \begin{itemize}
        \item \textbf{Spectrality:} $c(H) \in \Spec(H)$ for all $H$.
        \item \textbf{Shift:} Let $a: \R/ \Z  \rightarrow \R$ be smooth.  Then, 
\begin{equation}
\label{eq:shift}
    c \big(H + a \big) = c\big(H  \big) + \int_{0}^1 a(t) \, dt.
\end{equation}

    \item \textbf{Hofer stability:} For any pair of Hamiltonians $G,H$ we have
    \begin{equation}\label{eq:Hofer_cont}
    \int_0^1 \min_x \Big(G(t,x)-H(t,x)\Big)\ dt\leq c(G)-c(H)\leq \int_0^1 \max_x \Big(G(t,x)-H(t,x)\Big)\ dt
    \end{equation}
    In particular, $c(\cdot)$ is continuous with respect to the Hofer norm $\|\cdot \|_H$, and is monotone: if $G\leq H$ then $c(G)\leq c(H)$.

    \item \textbf{Homotopy invariance:} Suppose that  $\tV_H=\tV_G$ in the universal cover $\tHam (M, \omega)$ and that  $\int_0^1\int_M H \omega^n dt = \int_0^1\int_M G \omega^n dt$.  Then,  
\begin{equation}
\label{eq:normal}
    c(H)= c\big(G\big).
\end{equation}

    \item \textbf{Sub-additivity:}   We have
\begin{equation}
\label{eq:tri}
    c( H\sharp G) \leq c (H) + c (G).
\end{equation}

    \item \textbf{Symplectic invariance:} For any $\psi \in \Symp_0(M, \omega)$ a symplectomorphism that is isotopic to the identity,  and $H \in C^\infty(\R/\Z \times M)$ we have 
\begin{equation}
\label{eq:symp_inv}
    c \big(H \circ \psi ) = c\big(H  \big).
\end{equation}

 \end{itemize}
\end{definition}

Notice that we do not assume the spectral invariants are normalized ($c(H\equiv 0)=0$) or that they are invariant for symplectomorphisms that are not isotopic to the identity.

\begin{rem}\label{rem:rational}
    Hamiltonian Floer homology spectral invariants satisfy the spectrality property when $(M, \omega)$ is rational.  Although the spectrality property is crucial to the proof of Lemma \ref{lem:derivative}, it seems plausible that it can be removed.  Without the rationality assumption, spectrality is known to hold only for non-degenerate Hamiltonians \cite{usher2008spectral, Oh09spectral}; this on its own is not sufficient for our purposes.  
\end{rem}

\subsection{The spectral pseudo-norm}\label{sec:spec-norm}
The spectral pseudo-norm $\gamma(H)$ of a Hamiltonian $H \colon \R/ \Z \times M \to \R$ is defined by 
\[
\gamma(H) :=c(H) + c(\bar H)
\]
and the spectral pseudo-norm $\gamma(\varphi)$ of $\varphi \in \Ham(M,\omega)$ is given by
\begin{equation}\label{eq:spec-norm}
    \gamma(\varphi) := \inf_{\varphi_H=\varphi} \gamma(H).
\end{equation}

The map  $$\gamma: \Ham(M, \omega) \rightarrow \R$$ is a conjugation invariant (possibly degenerate) pseudo-norm, i.e. it satisfies the following properties:
\begin{enumerate}
    \item \label{item:non-deg} $\gamma(\varphi) \geq 0$, with equality if $\varphi = \id$. 
    \item  $ \gamma(\varphi \psi) \leq \gamma(\varphi) + \gamma(\psi).$ 
    \item  $\gamma(\varphi \psi \varphi^{-1}) = \gamma(\psi)$.
\end{enumerate}

It induces a bi-invariant (possibly degenerate) metric $d_{\gamma}$ on $\Ham(M,\omega)$ by setting $$ d_{\gamma}(\varphi, \psi) :=\gamma(\varphi \psi^{-1}). $$  Recall that bi-invariance means $d_{\gamma}(\theta \varphi, \theta \psi) = d_{\gamma}( \varphi,  \psi) =  d_{\gamma}( \varphi \theta ,  \psi \theta)$.  

\medskip

We end this section with the definition of boundedness.

\begin{definition}\label{defn:bdd}
    We say a symplectic manifold $(M, \omega)$ is {\bf bounded} if it admits a an sub-additive spectral invariant $c$ such that the associated spectral pseudo-norm $\gamma$ is bounded. 
\end{definition}
\subsection{Heaviness}\label{sec:heaviness}
The homogenized spectral invariant is defined \cite{entov2006quasi} to be 
\[
\zeta(H):= \lim_{k\rightarrow \infty}\frac{c(H^{\sharp k})}{k}.
\]
Notice that by the sub-additivity property, $\zeta(H)\leq c(H)$.
A subset $L\subset M$ is \emph{heavy} \cite{entov2009rigid} with respect to a spectral invariant $c$ if 
\[
\inf_{x\in L} H(x) \leq \zeta(H) \quad\text{for all time-independent }H:M\rightarrow\R.
\]
Note that the heaviness of a given set $L$ depends on the spectral invariant, see for example \cite{entov2009rigid,kawamoto2024spectral}. 

Typically, heavy sets come from certain Lagrangian submanifolds or CW complexes, with non-vanishing Lagrangian Floer homology. Heaviness implies the following bound for time-dependent Hamiltonians.
\begin{lemma}\label{lem:heavy-time-dependent}
    Let $G:\R/\Z\times M\rightarrow \R$ be a time-dependent Hamiltonian, then
    \[
    c(G)\geq \int_0^1 \inf_{x\in L} G(t,x)\ dt.
    \]
\end{lemma}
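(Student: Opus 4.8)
The plan is to reduce the time-dependent bound to the time-independent definition of heaviness by discretizing the time interval and using sub-additivity of $c$. First I would fix $N \in \N$ and partition $[0,1]$ into $N$ equal subintervals. On the $i$-th subinterval, I would replace $G(t, \cdot)$ by a time-independent Hamiltonian $G_i : M \to \R$ defined (up to the reparametrization built into the $\sharp$ operation) by $G_i(x) := \min_{t \in [(i-1)/N, i/N]} G(t,x)$, or more conveniently something Hofer-close to the restriction of $G$ to that subinterval. The key structural fact is that the concatenation $G_1 \sharp \cdots \sharp G_N$ has time-one flow equal to a composition of flows, and by sub-additivity \eqref{eq:tri} we get $c(G_1 \sharp \cdots \sharp G_N) \le \sum_{i=1}^N c(G_i)$ — wait, that goes the wrong way, so instead I would work with $c$ of $G$ itself bounded below.

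Let me restructure: the cleaner route is to use that $c(G)$ for $G$ time-dependent can be compared, via Hofer stability \eqref{eq:Hofer_cont}, to $c$ of a piecewise-constant-in-time Hamiltonian, and then handle the piecewise-constant case. Concretely, I would approximate $G$ in the Hofer norm by $\widetilde{G} = G_1 \sharp \cdots \sharp G_N$ where each $G_i$ is time-independent and $C^0$-close to $G$ on the $i$-th subinterval; by Hofer continuity of $c$ it suffices to bound $c(\widetilde G)$ from below. For the concatenation I need a lower bound, so I would instead use the following: apply sub-additivity to $\widetilde G = G_1 \sharp (G_2 \sharp \cdots \sharp G_N)$ repeatedly — but again sub-additivity only gives upper bounds. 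The correct tool for a lower bound on $c$ of a concatenation is to write $c(H) = -c(\bar H) + \gamma(H) \ge -c(\bar H)$ when $\gamma \ge 0$... that is not tight enough either.

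The honest approach: pass to the homogenized invariant. Observe that for a time-\emph{independent} Hamiltonian $F$ with $F \le \zeta(F)$ replaced by the heaviness inequality $\inf_{x \in L} F(x) \le \zeta(F) \le c(F)$, the bound $c(F) \ge \inf_L F$ is immediate from the definition of heaviness (using $\zeta \le c$). For the time-dependent case, I would use sub-additivity in the form: for any $N$, $c(G) \ge \tfrac{1}{N} c(G^{\sharp N})$ is false, but rather consider that $G$ itself, reparametrized, sits inside a concatenation. The genuinely robust argument is: pick any time-independent $F \le G$ pointwise in the sense $F(x) \le G(t,x)$ for all $t$, namely $F(x) = \min_t G(t,x)$; then by monotonicity (from Hofer stability) $c(G) \ge c(F) \ge \zeta(F) \ge \inf_{x\in L} F(x) = \inf_{x \in L}\min_t G(t,x)$. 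This gives $c(G) \ge \inf_{x\in L}\min_t G(t,x)$, which is weaker than the claimed $\int_0^1 \inf_{x\in L} G(t,x)\, dt$.

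To get the sharper integral bound I would combine the above with the Shift axiom and a finer discretization. Write $G(t,x) = \big(G(t,x) - a(t)\big) + a(t)$ where $a(t) := \inf_{x \in L} G(t,x)$; then $\widehat G(t,x) := G(t,x) - a(t)$ satisfies $\inf_{x\in L}\widehat G(t,\cdot) = 0$ for every $t$, so I expect $c(\widehat G) \ge 0$, and then by \eqref{eq:shift}, $c(G) = c(\widehat G) + \int_0^1 a(t)\,dt \ge \int_0^1 \inf_{x\in L} G(t,x)\,dt$. So the crux is proving $c(H) \ge 0$ whenever $\inf_{x \in L} H(t,x) \ge 0$ for all $t$ — i.e. the case $a \equiv 0$. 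For this I would use sub-additivity on the $N$-fold concatenation together with homogenization: $c(H^{\sharp N}) \le N c(H)$, hence $c(H) \ge \tfrac{1}{N} c(H^{\sharp N}) \ge \zeta(H)$... no — $\zeta(H) = \lim c(H^{\sharp N})/N \le c(H)$ directly. So it reduces to showing $\zeta(H) \ge 0$ for time-dependent $H$ with $\inf_L H(t,\cdot)\ge 0$. Here I would approximate $H^{\sharp N}$, for the flow computation, by the concatenation of time-independent Hamiltonians $H_i := \min_t H(t,\cdot) + \varepsilon_i \ge 0$ on $L$, apply the time-independent heaviness bound $c(H_i) \ge \zeta(H_i) \ge \inf_L H_i \ge 0$ to \emph{lower}-bound... again the inequality direction fights me.

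Given these direction issues, the cleanest correct plan I would actually write down: reduce via \eqref{eq:shift} to the normalized case $a \equiv 0$, i.e. $G \ge 0$ on $L$; then it suffices to show $c(G) \ge 0$. Consider the autonomous Hamiltonian $F(x) := \min_{t} G(t,x)$; by monotonicity $c(G) \ge c(F)$, and by time-independent heaviness $c(F) \ge \zeta(F) \ge \inf_{x \in L} F(x) = \inf_{x\in L}\min_t G(t,x) \ge 0$ since $G(t,x)\ge 0$ for $x \in L$ by hypothesis. Therefore $c(G) \ge 0$, and undoing the shift gives $c(G) \ge \int_0^1 \inf_{x \in L} G(t,x)\,dt$. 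The main obstacle — and the reason to be careful — is verifying that $c(F) \ge c(G)$ fails direction-wise unless $F \le G$ pointwise, which it is ($F(x) = \min_t G(t,x) \le G(t,x)$ for all $t,x$, so by the monotonicity consequence of \eqref{eq:Hofer_cont}, $c(F) \le c(G)$, good), and then that time-independent heaviness applies to $F$ to yield $c(F) \ge \inf_L F$. This chain is valid, and the only subtlety is that $\min_t G(t,x)$ need not be smooth; I would handle that by a routine smoothing argument, approximating $F$ from below in $C^0$ by smooth autonomous Hamiltonians and using Hofer continuity of $c$.
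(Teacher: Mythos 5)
Your ``cleanest correct plan'' at the end is precisely the paper's proof: after using the shift axiom to normalize away $a(t):=\inf_{x\in L}G(t,x)$, you compare $G$ to the autonomous minorant $F(x)=\min_t\bigl(G(t,x)-a(t)\bigr)$ (the paper's $H$) via Hofer monotonicity, apply time-independent heaviness to $F$, and observe $\inf_L F\ge 0$. The smoothing caveat you flag (the pointwise $\min_t$ need not be smooth) is a genuine detail that the paper silently elides; it is handled by the routine approximation you describe, and this is the only point at which you say more than the paper does.
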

\begin{proof}
    Denote $a(t):=\inf_{x\in L} G(t,x)$ then we need to show that $c(G)\geq \int_0^1 a(t)\ dt$. For that end, set $H(x):=\min_t \Big(G(t,x)-a(t)\Big)$, then clearly $H$ is autonomous and therefore 
    \[
    c(H)\geq \zeta(H) \geq \inf_{x\in L} H(x). 
    \]
    Since $H(x)\leq G(t,x)-a(t)$ pointwise, the stability and shift properties of $c$ imply
    \[
    c(H)\leq c(G-a) = c(G) - \int_0^1 a(t)\ dt.
    \]
    To prove the claim, it suffices to show that $0 \leq c(H)$.  This last inequality is a consequence of the fact that $\inf_{x\in L} H(x)\geq 0$, whose proof we omit. 
\end{proof}

\subsection{Examples} \label{subsec:FH}
Many of the results in our paper require pairs $(M, L)$, with $M$ a rational and bounded symplectic manifold and $L$ a heavy subset of $M$.  It is important to note that  we require the existence of a sub-additive spectral invariant for which (the rational) $M$ is bounded and $L$ is heavy. We see in this section that examples of such pairs are abundant. 

All examples of sub-additive spectral invariants known to us arise from Hamiltonian Floer homology and the fact that it is isomorphic to quantum homology.  Every idempotent $e$ of the quantum homology ring of $(M, \omega)$, gives rise to a sub-additive spectral invariant satisfying the above axioms \cite{schwarz, oh, McDuff-Salamon-Jcurves, entov2009rigid}.

It is well known that the spectral norm\footnote{In the case of the fundamental class, $\gamma$ is non-degenerate, hence a genuine norm.} associated to the unit element, i.e.\ the fundamental class, in Floer homology (with field coefficients) is bounded on $\CP^n$  \cite{entov2006quasi,polterovich2014function}. Furthermore, products of complex projective spaces \cite[Thm.\ 6.1] {entov2006quasi}, certain other toric manifolds \cite{OT-toric}, and one point blow-ups of a large class of symplectic manifolds \cite[Thm.\ 7.1] {McDuff-uniruled, entov2006quasi}  all admit a bounded spectral pseudo-norm. In all of these cases, the underlying idempotent is the unit element of a field direct summand of the quantum homology.\footnote{As a consequence, all heavy sets for these idempotents are super-heavy \cite[Rem.\ 1.3]{entov2009rigid}.} As for the corresponding heavy sets, the Clifford torus and $\RP^n$ in $\C P^n$ are both heavy, as well as the Clifford torus in $\C P^1\times \C P^1$ \cite{entov2009rigid}; for (monotone) toric symplectic manifolds, a certain fiber of the moment map, called the special fiber, is heavy for any idempotent \cite[Thm.\ 1.11]{entov2009rigid}. In general, for any closed symplectic manifold and for any idempotent, the co-dimension one skeleton of a sufficiently fine triangulation is heavy \cite[Thm.\ 1.6]{entov2009rigid}. 

 Usher and Fukaya-Oh-Ohta-Ono have shown that bulk deformed versions of Hamiltonian Floer homology produce sub-additive spectral invariants with bounded spectral pseudo-norm on a large class of symplectic manifolds, including all toric symplectic manifolds; see \cite[Thm.\ 1.4, Thm.\ 1.6]{usher2011deformed} and \cite[Thm.\ 1.1, Cor.\ 1.2, Thm.\ 16.3, Lem.\ 16.5]{fukaya2019spectral} .  However, we have chosen to avoid using bulk-deformed Hamiltonian Floer homology in order to sidestep complications associated with the spectrality property of bulk-deformed spectral invariants. Finally, it seems that the results in \cite{judd2019}, combined with the theory developed in \cite{fukaya2019spectral}, yield sub-additive spectral invariants with bounded spectral pseudo-norm for all rational toric symplectic manifolds.\footnote{We thank Cheuk Yu Mak for pointing this out to us.}

\subsection{Heaviness in Herman's non-closing lemma}\label{sec:Herman-heaviness}
In this section we use Herman's counterexample to the $C^k$ closing lemma to show that the boundedness assumption in our results is necessary. In this section, $M=\T^4$ is the 4-dimensional torus with coordinates $x_1,x_2,x_3,x_4\in \R/\Z$. Let 
$$\omega_\alpha:= dx_2\wedge dx_1 + dx_4\wedge dx_3 + \alpha_1 dx_3\wedge dx_2+ \alpha_2 dx_1\wedge dx_3$$ 
be the symplectic form  used by Herman\footnote{This symplectic form was originally considered by Zehnder \cite{Zehnder-torus}.}, corresponding to a vector $\alpha=(\alpha_1,\alpha_2,1)\in \R^3$ with rationally independent entries and satisfying a certain Diophantine condition, see \cite{herman1991exemples} and \cite[Section 4.5]{hofer2012symplectic} for further details.
Consider the Hamiltonian $H(x)=\beta(x_4)$ for $\beta:[0,1]\rightarrow [0,1]$ that is periodic and is linear on an open interval $I\subset (0,1)$. For this symplectic form, the Hamiltonian $H(x)$ generates the Hamiltonian vector field
$$
X_{H} = \beta'(x_4)\cdot (\alpha_1, \alpha_2,1,0)
$$
which is constant for $x_4\in I$. The flow on $\T^3\times I\subset \T^4$ is an irrational translation on each $\{x_4=\theta\}\cong\T^3\times \{\theta\}$, and thus $\varphi_{H}$ has no periodic points in the open set $\T^3\times I$. By Herman's non-closing lemma (e.g.\ Theorem 10 in \cite[Section 4.5]{hofer2012symplectic}), for an open $I'$ compactly contained in $I$, there is an open $C^\infty$-neighborhood of $H$ in $C^\infty(\T^4;\R)$ consisting of autonomous Hamiltonians with no periodic points in $\T^3\times I'$.

Fix $\theta \in I'$ and denote $L:=\T^3\times \{\theta\}$.  Clearly, $\varphi_H \in \Ham_L(M, \omega_\alpha)$ but the conclusion of Theorem \ref{thm:closing_near_heavy-2} does not hold here.  In Lemma \ref{lem:T3_heavy} below, we show that the hypersurface \( L := \T^3 \times \{\theta\} \) is a heavy set in \( (\T^4, \omega_\alpha) \) with respect to the spectral invariant associated with the fundamental class in Hamiltonian Floer homology. This implies that the failure of Theorem \ref{thm:closing_near_heavy-2} in this setting is due to the lack of boundedness.\footnote{We note that while this lack of boundedness follows from the above reasoning, it can also be established directly using an argument similar to the proof of Lemma \ref{lem:T3_heavy}.}

\medskip
In the lemma below, $$c: C^\infty([0,1] \times M) \rightarrow \R$$
denotes the Hamiltonian spectral invariant associated to the fundamental class.

\begin{lemma}\label{lem:T3_heavy}
    The set $L:=\T^3\times \{\theta\}$  is heavy  in $(\T^4, \omega_\alpha)$ with respect to the sub-additive spectral invariant $c$.
\end{lemma}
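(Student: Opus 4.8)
The goal is to show that $L = \T^3 \times \{\theta\}$ is heavy in $(\T^4, \omega_\alpha)$ with respect to the spectral invariant $c$ of the fundamental class, i.e.\ that $\inf_{x \in L} H(x) \leq \zeta(H)$ for every autonomous $H \colon \T^4 \to \R$. Since $\zeta$ is monotone and satisfies the shift property, and since $\zeta(H) = \zeta(H \circ \psi)$ for $\psi \in \Symp_0$, the plan is to reduce to a normalization where $\inf_L H = 0$ and then bound $\zeta(H)$ from below by $0$. After subtracting the constant $\inf_L H$, we may assume $H|_L \geq 0$ somewhere vanishes; actually it is cleaner to first replace $H$ by $\min(H, 0)$-type truncations — but the key move is different, as explained below.

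**Key steps.** First, I would recall that $\zeta$ is homogeneous and sub-additive, so $\zeta(H) = \lim_k c(H^{\sharp k})/k$ and it suffices to produce, for the Hamiltonian $H$ normalized so that $\min_L H = 0$, a lower bound $c(H^{\sharp k}) \geq -C$ independent of $k$, or more directly to bound $\zeta(H) \geq 0$. The natural strategy is to displace the complement of $L$: since $L = \T^3 \times \{\theta\}$ has codimension one, a small neighborhood $\T^3 \times (\theta - \varepsilon, \theta + \varepsilon)$ is \emph{not} displaceable, but the point is that we can find a Hamiltonian isotopy of $\T^4$ (using the $x_4$-circle direction) that is supported away from $L$ and realizes a translation in the $x_4$ coordinate on most of $\T^4$. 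The cleanest route: use the symplectic invariance under $\psi \in \Symp_0(\T^4, \omega_\alpha)$, together with the fact that translations in the $x_4$-direction (which are symplectic for $\omega_\alpha$, being translations of a torus) are isotopic to the identity, to average $H$ over the $x_4$-circle. Concretely, if $\tau_s$ denotes translation by $s$ in the $x_4$ coordinate, then $\tau_s \in \Symp_0$, so $\zeta(H \circ \tau_s) = \zeta(H)$; since $\zeta$ is monotone and the map $s \mapsto H \circ \tau_s$ sweeps out a family whose pointwise infimum over $s$ is $\underline{H}(x_1,x_2,x_3) := \inf_{x_4} H(x_1,x_2,x_3,x_4)$, and whose restriction to $L$ is controlled, one deduces $\zeta(H) = \zeta(H \circ \tau_s) \geq \zeta(\underline H)$ — here $\underline H$ is a function pulled back from $\T^3$. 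Then I would invoke heaviness of the zero section $\T^3 \subset \T^3$ "with respect to itself" in the Floer-theoretic sense: a Hamiltonian pulled back from the base of the fibration $\T^4 \to \T^3$ (along $x_4$) has $\zeta$ equal to its value over a suitable fiber-independent quantity; more precisely, since $\underline H$ is independent of $x_4$, one shows $\zeta(\underline H) = \int \underline H$ or at least $\zeta(\underline H) \geq \min \underline H$, and since $\min \underline H = \min_{\T^4} H \leq \inf_L H$ after the normalization, we are done. Alternatively, and perhaps more robustly, I would use Lemma~\ref{lem:heavy-time-dependent}-style reasoning in reverse: show directly that $L$ is the zero set locus of a "monotone Lagrangian-like" rigidity by exhibiting that $\T^3 \times \{\theta\}$, for the Diophantine $\alpha$, is a Lagrangian-type submanifold with nonvanishing Floer homology — but this requires that $\omega_\alpha|_L = 0$, which holds precisely because $L$ is a graph over a Lagrangian $\T^2$ — let me flag this as the crux.

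**Main obstacle.** The heart of the matter is identifying why $L = \T^3 \times \{\theta\}$ carries enough Floer-theoretic rigidity: $L$ is a \emph{hypersurface}, not a Lagrangian, so the standard "Lagrangian with nonvanishing $HF$ is heavy" principle does not apply directly. The real mechanism must be that $\omega_\alpha$ restricted to $L = \{x_4 = \theta\}$ equals $dx_2 \wedge dx_1 + \alpha_1 dx_3 \wedge dx_2$, which is a nonzero $2$-form on $\T^3$ — so $L$ is not even coisotropic-with-nice-reduction in an obvious way. I expect the actual argument to exploit the specific structure: the characteristic foliation of $L$ is by the lines in direction $(\alpha_1, \alpha_2, 1)$, the orbits of $X_H$ for $H = \beta(x_4)$, and the Diophantine condition makes this foliation by minimal (uniquely ergodic) lines — and this is exactly what forces the spectral invariant to "see" $L$. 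So the main obstacle is setting up the right comparison Hamiltonian: given autonomous $H$ with $\inf_L H = 0$, one must construct, perhaps via the reduction along the characteristic direction or via an explicit family of embedded balls/tori refining $L$, a lower bound $\zeta(H) \geq 0$. I would try to mimic the proof of heaviness for the special fiber of a toric moment map (Entov–Polterovich), adapting it to this non-toric but completely integrable situation, and I anticipate the technical core is verifying a Floer-homological nonvanishing statement for the (immersed or embedded Lagrangian) tori foliating $\T^3 \times I$, analogous to the Clifford torus computation, using the Diophantine condition to control the relevant disk counts or — more likely in this flat setting — using that $(\T^4, \omega_\alpha)$ near $L$ looks like $T^*\T^3$-with-a-twist where the zero section is displaceable-obstructed.
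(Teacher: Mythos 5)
Your proposed argument has a genuine logical gap at its final step, and the approach is in fact the opposite of what works. You average $H$ over translations $\tau_s$ in the $x_4$-direction to obtain $\underline H := \inf_{x_4} H$, a function independent of $x_4$, and conclude $\zeta(H) \geq \zeta(\underline H) \geq \min \underline H = \min_{\T^4} H$. But the required inequality is $\zeta(H) \geq \inf_L H$, and since $\min_{\T^4} H \leq \inf_L H$, your chain of inequalities gives a bound that is \emph{weaker} than heaviness, not stronger; the line \emph{``since $\min \underline H \leq \inf_L H$ ... we are done''} runs in the wrong direction. More fundamentally, averaging over $x_4$ erases the coordinate $\theta$ that locates the hypersurface $L = \T^3 \times \{\theta\}$, so this route cannot produce a bound specific to $L$.

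The paper does the opposite of averaging: given an autonomous $G$ with $G|_L \equiv a$, it constructs a comparison Hamiltonian $F = f(x_4)$ that \emph{depends only on $x_4$} (thus remembering $\theta$), with $F \leq G$, $F|_L = a$, and exactly two critical values $0$ and $a$. Then $X_F = f'(x_4)\,(\alpha_1,\alpha_2,1,0)$ points in the Diophantine direction, so the flow has no non-constant one-periodic orbits and (using $\pi_2(\T^4)=0$) one computes $\Spec(F) = \{0,a\}$ directly. Spectrality plus the fact that $c$ is the fundamental-class invariant forces $c(F) = a$, and monotonicity gives $c(G) \geq c(F) = a$; scaling and approximation then yield heaviness. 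You correctly identify the Diophantine characteristic direction as the rigidity mechanism, and you correctly flag that $L$ is a hypersurface rather than a Lagrangian so that Lagrangian Floer arguments do not apply directly — but neither the averaging route nor the ``Floer homology of foliating tori'' alternative you sketch is how this rigidity is actually exploited. The mechanism is entirely elementary: a comparison Hamiltonian function of $x_4$ alone, spectrality, and the explicit two-point action spectrum.
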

\begin{proof}
    By the monotonicity property of spectral invariants, it is sufficient  to show that for any autonomous Hamiltonian $G$  that is constant on $L$, $G|_L\equiv a$, it holds that $c(G)\geq a$. 

    Given such Hamiltonian $G$, there exists an autonomous $F$ such that 
    \begin{enumerate}
        \item $F\leq G$,
        \item $F|_L=G|_L$,
        \item $F = f(x_4)$ only depends on $x_4$,
        \item $F$ has exactly two critical values: 0 and $a$.
    \end{enumerate}
    For $F$ as above, $X_F = f'\cdot (\alpha_1, \alpha_2,1,0)$. In particular the flow has no non-constant one periodic orbits. It therefore follows that the action spectrum is $\Spec(F)=\{0,a\}$. Since $c$ is the spectral invariant associated to the fundamental class, we must have $c(F)=a$. By the monotonicity property of the spectral invariants, this implies $c(G)\geq a$.
\end{proof}

\section{General and quantitative results}

In this section, we state Theorem \ref{thm:precise1} which generalizes Theorem \ref{thm:recurrent} and plays an important role in the proofs of our main results.

Before stating our theorem, we describe the setting in which it applies.  Let $(M, \omega)$ be closed and connected and fix an sub-additive spectral invariant $$ c: C^\infty([0,1] \times M) \rightarrow \R. $$
The choice of the spectral invariant $c$ is fixed for the remainder of the paper, unless otherwise stated. 
Hence, when we say $(M, \omega)$ is bounded we mean that it is bounded, in the sense of Definition \ref{defn:bdd}, with respect to the spectral pseudo-metric $\gamma$ determined by $c$.  Similarly, when we say a subset $L \subset M$ is heavy, we mean that it is heavy, in the sense of Section \ref{sec:heaviness}, with respect to $c$.

    Given $\psi \in \Ham(M, \omega)$, and subsets $L, V\subset M$, we define the recurrence frequency  
    \[
    \rho_{\psi}(L, V):=\limsup_k  \frac{T_{\psi}^k(L, V)}{k},
    \]
    where $T_{\psi}^k(L, V)$ is defined as in \eqref{eqn:event count}. Recall from Section \ref{sec:measures} that for a $k$-periodic point $x$ of $\psi$, the associated invariant probability measure $\mu_x^{\psi}$ is given by 
\[
\mu_x^\psi =\frac{1}{k}\sum_{j=0}^{k-1} \delta_{\psi^j(x)}.
\]

\begin{thm}
\label{thm:precise1}
    Suppose that $(M, \omega)$ is rational and that $L \subset (M, \omega)$ is heavy. Let $\psi \in \Ham(M,\omega)$,  and let $G \colon M \to \R$ be a non-negative autonomous Hamiltonian that is equal to its maximum on $L$. Denote by $V:=G^{-1}(\max (G))$. We have:
    \begin{itemize}
        \item[(i)] For all $k\in \N$, there exists $s\in [0,1]$ such that the composition $ \varphi^s_{G}\psi$ has a $k$-periodic point $x$ which satisfies 
    \begin{equation}
    \label{eq:thm_precise1_i}
        \frac{1}{\max(G)}\int_M G \ d\mu^{\varphi^s_{G}\psi}_ x\geq\frac{1}{k} T_{\psi}^k(L, V) - \frac{\gamma(\psi^k)}{k\cdot \max(G)}
    \end{equation}
   
    \item[(ii)] Suppose in addition that $(M, \omega)$ is bounded. Then, for all $0< \alpha <1$, there exists a sequence of subsets $S_k^{\alpha}\subset [0,1]$  with 
    \begin{equation}\label{eq:measure_bound}
        \limsup_{k \to \infty} m(S_k^{\alpha}) \geq \frac{\alpha \rho_{\psi}(L, V)}{1-(1-\alpha)\rho_{\psi}(L, V)}  
    \end{equation}
     and such that for all $s \in S_k^{\alpha}$, the composition $\varphi^s_{G}\psi$ has a $k$-periodic point $x$  satisfying
        \begin{equation}
        \label{eq:thm_precise1_ii}
           \frac{1}{\max(G)}\int_M G \ d\mu^{\varphi^s_{G}\psi}_x > (1-\alpha) \frac{1}{k} T_{\psi}^k(L, V).
        \end{equation}
    \end{itemize} 
\end{thm}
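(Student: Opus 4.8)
The plan is to build the perturbation out of the autonomous Hamiltonian $G$ by considering, for a parameter $s\in[0,1]$, the map $\varphi^s_G\psi$ and iterating it $k$ times. The key identity is that $(\varphi^s_G\psi)^k$ is generated (up to homotopy rel endpoints) by a concatenation of the form $(s G\circ\psi^{k})\sharp\cdots\sharp(sG\circ\psi)\sharp H^{\sharp k}_\psi$, where $H_\psi$ generates $\psi$; equivalently one writes $\gamma\big((\varphi^s_G\psi)^k\big)$ in terms of $c$ evaluated on this concatenation. Using sub-additivity \eqref{eq:tri} together with the symplectic/homotopy invariance to handle the conjugations $G\circ\psi^{i}$, one gets an estimate controlling $c$ of the iterated Hamiltonian from above by $c(\psi^k$-Hamiltonian$)$ plus a term linear in $s$, where the linear coefficient is essentially $\sum_{i=1}^k \max_{\psi^{-i}(\cdot)}$ of $G$ restricted along the relevant orbit — and here heaviness of $L$, via Lemma \ref{lem:heavy-time-dependent}, provides the matching lower bound: because $L$ is heavy and $\psi(L)=L$ is irrelevant (we only use $\psi^i(L)\subset V$ via the event count), $c$ of the iterated Hamiltonian is bounded below by $s\cdot\max(G)\cdot T^k_\psi(L,V)$ up to the $\gamma(\psi^k)$ error.

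Next I would differentiate in $s$. The function $s\mapsto c\big((\varphi^s_G\psi)^k\text{-Hamiltonian}\big)$ is Lipschitz in $s$ (Hofer stability \eqref{eq:Hofer_cont}), hence differentiable almost everywhere, and at a point of differentiability the spectrality axiom forces the derivative to be realized by an actual $k$-periodic point $x$ of $\varphi^s_G\psi$: this is the content of what the paper calls Lemma \ref{lem:derivative}, and it is exactly where rationality/spectrality is used. The derivative computation then yields $\frac{d}{ds}\,c \asymp \int_M G\, d\mu^{\varphi^s_G\psi}_x$ (up to the factor $1/k$ coming from the normalization of $\mu_x$). Integrating the derivative bound over $s\in[0,1]$ against the two-sided estimate from the previous paragraph produces \eqref{eq:thm_precise1_i}: the average over $s$ of $\frac{1}{\max(G)}\int G\,d\mu_x$ is at least $\frac1k T^k_\psi(L,V) - \frac{\gamma(\psi^k)}{k\max(G)}$, so some $s$ achieves it.

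For part (ii), the refinement is a Chebyshev/measure-theoretic argument on top of (i): one knows the $s$-average of the nonnegative quantity $\frac{1}{\max(G)}\int G\,d\mu_x\le 1$ is at least $\beta_k:=\frac1k T^k_\psi(L,V)-\frac{\gamma(\psi^k)}{k\max(G)}$, and when $(M,\omega)$ is bounded $\gamma(\psi^k)$ is bounded uniformly in $k$, so $\liminf_k\beta_k\ge\rho_\psi(L,V)$. Applying the reverse Markov inequality to a $[0,1]$-valued function with mean $\ge\beta_k$ shows the set $S^\alpha_k$ where the value exceeds $(1-\alpha)\frac1k T^k_\psi(L,V)$ has measure at least $\big(\beta_k-(1-\alpha)\frac1k T^k_\psi(L,V)\big)/\big(1-(1-\alpha)\frac1k T^k_\psi(L,V)\big)$; passing to $\limsup$ and using $\beta_k\to\rho$ gives \eqref{eq:measure_bound} after simplification. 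The main obstacle, and the step deserving the most care, is the first paragraph: correctly bookkeeping the concatenation $(\varphi^s_G\psi)^k$, tracking the lift to $\tHam$ through the conjugations $G\circ\psi^i$, and feeding the orbit-visit condition $\psi^i(L)\subset V$ into the heaviness estimate so that precisely $T^k_\psi(L,V)$ copies of $s\max(G)$ survive in the lower bound — everything after that is soft (differentiation a.e., spectrality, and elementary measure theory).
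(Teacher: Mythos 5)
Your proposal is correct and follows the paper's architecture closely: the same decomposition of $(\varphi^s_G\psi)^k$ as (the time-one map of) a concatenation of the conjugates $G\circ\psi^i$ with a Hamiltonian generating $\psi^k$; the same use of heaviness (Lemma \ref{lem:heavy-time-dependent}) to bound $c$ of the concatenation from below by $T^k_\psi(L,V)\max(G)$ up to a $\gamma(\psi^k)$-error; and the same differentiation-in-$s$ argument, anchored on spectrality and rationality, to identify $\partial_s c(H_s)$ with $k\int G\,d\mu^{\varphi^s_G\psi}_{x_s(0)}$ (Lemmas \ref{lem:derivative} and \ref{lem:support}). Two small remarks. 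First, a bookkeeping slip: with the paper's convention $\varphi_{H_1\sharp H_2}=\varphi_{H_2}\circ\varphi_{H_1}$, the concatenation must run $G\circ\psi\,\sharp\cdots\sharp\,G\circ\psi^k\,\sharp\,F_k$, not in the reverse order you wrote; otherwise the conjugations do not telescope to $(\varphi_{sG}\psi)^k$. Second, your treatment of part (ii) is in fact a mild simplification of the paper's. The paper invokes Lemma \ref{lem:derivative}(ii), whose constant $K=\max_{s,t,x}\partial_s H_s$ is a pointwise sup, and then must introduce a time re-parametrization $\lambda_k$ with $\max\lambda'_k\le 1+1/k$ just to bring $K_k$ down to $\approx k\max(G)$. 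You instead use directly that $\frac{1}{\max(G)}\int_M G\,d\mu^{\varphi^s_G\psi}_x\in[0,1]$ because $\mu_x$ is a probability measure and $G\le\max(G)$, and then apply the reverse Markov inequality to this $[0,1]$-valued function with $s$-mean $\ge T^k_\psi(L,V)/k - \gamma(\psi^k)/(k\max G)$; this reproduces \eqref{eq:measure_bound} without any re-parametrization. To make this fully rigorous you would need a cosmetic restatement of Lemma \ref{lem:derivative}(ii) in which $K$ is replaced by an a.e.\ upper bound on $\int_{x_s}\partial_s H_s$ rather than on $\partial_s H_s$ pointwise, but the proof (generic paths plus the limiting argument) goes through verbatim.
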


Note that, since $G\vert_L = \max(G)$, we have $\varphi_G^s|_L = \mathrm{Id}$. In particular, when $L$ is invariant under $\psi$, the composition $ \varphi^s_{G}\psi \in \Ham_{L, \psi}(M,\omega)$.

\begin{rem}
    \label{rem:lim_vs_limsup}
    It is easy to see from the proof of the Theorem \ref{thm:precise1} that when the limit  $T_{\psi}^k(L, V)/k$ exists, one can  replace $\limsup$ with $\liminf$ in part (ii) of Theorem \ref{thm:precise1}; see Corollary \ref{cor:precise_for_invariant}. 
\end{rem}

\begin{rem}  
\label{rem:thm_precise1}
We record the following observations about the statement of Theorem \ref{thm:precise1}.

    \begin{enumerate}
        \item Theorem \ref{thm:precise1} gives a lower bound for the number of times the orbit of the periodic point $x$ visits the support of $G$:
        \begin{align*}
            \frac{1}{\max(G)}\int_M G \ d\mu^{\varphi_G^s\psi}_x =  \frac{1}{k} \sum_{i=0}^{k-1} \frac{G((\varphi_G^s\psi)^i(x))}{\max(G)} \leq
            \frac{1}{k}T^k_{\varphi_{G}^s\psi}(x,\supp(G)).
        \end{align*}
         
        \item  Part (i) of Theorem \ref{thm:precise1} is only interesting for iterates which satisfy 
        $$T_{\psi}^k(L,V) > \gamma(\psi^k)/\max(G).$$
        In this case, \eqref{eq:thm_precise1_i} implies that $T^k_{\varphi^s_{G}\psi} (x, \supp (G)) >0$ and hence, in words, the perturbation $\varphi^s_{G}\psi$ has a periodic point passing through the support of $G$. 
        Otherwise, when $T_{\psi}^k(L,V) \leq \gamma(\psi^k)/\max(G)$, the conclusion \eqref{eq:thm_precise1_i} automatically holds for any $s \in [0,1]$ and any $k$-periodic point $x$ of $\varphi^s_{G}\psi$, regardless of whether or not the orbit of $x$ passes through the support of $G$. 

        \item   Similarly, in part (ii), if $\rho_{\psi}(L, V)=0$, the statement holds trivially for $S_k=\emptyset$. However, if $\rho_{\psi}(L, V)=0$  and we additionally have $T_{\psi}^k(L,V) \to \infty$ as $k\to \infty$, then part (i) tells us that for any fixed $0<\alpha<1$, there exist non-empty $S_k \subset [0,1]$ satisfying \eqref{eq:thm_precise1_ii} for all sufficiently large $k$. 
    \end{enumerate}
\end{rem}

Theorem \ref{thm:precise1} is proved in Section \ref{sec:pf_thm:precise1}. The following is an immediate corollary of Theorem \ref{thm:precise1} and Remarks \ref{rem:lim_vs_limsup}, \ref{rem:thm_precise1}. 

\begin{cor}\label{cor:precise_for_invariant}
Suppose that $(M, \omega)$ is rational, $L \subset (M, \omega)$ is heavy and  $\psi \in \Ham(M, \omega) $ leaves $L$ invariant. Let $G \colon M \to \R$ be a non-negative autonomous Hamiltonian that is equal to its maximum on $L$. We have:
\begin{itemize}

   \item[(i)] For all $k \in \N$, there exists $s\in [0,1]$ such that the composition $\varphi^s_{G}\psi$ has a $k$-periodic point $x$ which satisfies 
     $$
     T^k_{\varphi^s_{G}\psi} (x, \supp (G)) \geq k - \gamma(\psi^k)/\max(G).
   $$
   In particular, when $(M,\omega)$ is bounded, the error term $\gamma(\psi^k)/\max(G)$ can be chosen independent of $k \in N$.
 
    \item[(ii)] Suppose in addition that $(M, \omega)$ is bounded. Then, for all $0< \alpha <1$, there exists $S_k^\alpha \subset [0,1]$ with $m(S_k^\alpha) \to 1$ as $k \to \infty$ and such that for all  $s \in S_k^\alpha$, the composition $\varphi^s_{G}\psi$ has a $k$-periodic point $x$ satisfying 
    \begin{equation}\label{eq:cor_precise2}
        \frac{1}{\max(G)}\int_M G\ d\mu^{\varphi^s_{G}\psi}_x > (1-\alpha).
    \end{equation}
\end{itemize}
\end{cor}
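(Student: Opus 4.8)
The plan is to derive this directly from Theorem \ref{thm:precise1}, observing that the invariance of $L$ drastically simplifies the event-count quantities. First I would note that since $\psi(L)=L$ and $L\subset V=G^{-1}(\max G)$ (because $G|_L=\max G$), we have $\psi^i(L)=L\subset V$ for every $i$, so $T^k_\psi(L,V)=k$ for all $k$. Consequently the recurrence frequency satisfies $\rho_\psi(L,V)=\limsup_k T^k_\psi(L,V)/k = 1$. Moreover, since $T^k_\psi(L,V)/k\equiv 1$ is a genuine limit, Remark \ref{rem:lim_vs_limsup} applies and we may use the sharper $\liminf$-type conclusions.

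For part (i): substitute $T^k_\psi(L,V)=k$ into inequality \eqref{eq:thm_precise1_i} of Theorem \ref{thm:precise1}(i) to get, for the $k$-periodic point $x$ of $\varphi^s_G\psi$,
\[
\frac{1}{\max(G)}\int_M G\, d\mu^{\varphi^s_G\psi}_x \geq 1 - \frac{\gamma(\psi^k)}{k\cdot\max(G)}.
\]
Then I would invoke observation (1) of Remark \ref{rem:thm_precise1}, which says $\frac{1}{\max(G)}\int_M G\,d\mu^{\varphi^s_G\psi}_x \leq \frac{1}{k}T^k_{\varphi^s_G\psi}(x,\supp(G))$; combining these two bounds and multiplying through by $k$ yields $T^k_{\varphi^s_G\psi}(x,\supp(G))\geq k - \gamma(\psi^k)/\max(G)$, which is exactly the claimed inequality. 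The "in particular" clause is then immediate: when $(M,\omega)$ is bounded, $\gamma$ is a bounded pseudo-norm, so $\sup_k\gamma(\psi^k)<\infty$ and the error term $\gamma(\psi^k)/\max(G)$ is bounded independently of $k$.

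For part (ii): with $\rho_\psi(L,V)=1$, the right-hand side of the measure bound \eqref{eq:measure_bound} in Theorem \ref{thm:precise1}(ii) becomes $\dfrac{\alpha\cdot 1}{1-(1-\alpha)\cdot 1} = \dfrac{\alpha}{\alpha} = 1$, so $\limsup_k m(S^\alpha_k)\geq 1$, hence (measures being bounded by $1$) $\limsup_k m(S^\alpha_k)=1$; and since the limit $T^k_\psi(L,V)/k$ exists, Remark \ref{rem:lim_vs_limsup} upgrades this to $m(S^\alpha_k)\to 1$. For $s\in S^\alpha_k$, plugging $T^k_\psi(L,V)=k$ into \eqref{eq:thm_precise1_ii} gives directly $\frac{1}{\max(G)}\int_M G\, d\mu^{\varphi^s_G\psi}_x > (1-\alpha)\cdot\frac{1}{k}\cdot k = (1-\alpha)$, which is \eqref{eq:cor_precise2}. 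The only mild subtlety — the "main obstacle", though it is quite minor here — is making sure Remark \ref{rem:lim_vs_limsup} genuinely licenses replacing $\limsup$ by $\liminf$ in this exact-limit case; I would simply cite that remark (and the fact that its proof is transparent from the proof of Theorem \ref{thm:precise1}) rather than reproving it. Everything else is substitution and the elementary bound from Remark \ref{rem:thm_precise1}(1).
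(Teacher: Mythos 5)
Your proof is correct and follows exactly the paper's intended route: the authors state that the corollary is an ``immediate corollary of Theorem \ref{thm:precise1} and Remarks \ref{rem:lim_vs_limsup}, \ref{rem:thm_precise1}'', and your argument simply fills in those substitutions ($T^k_\psi(L,V)=k$, hence $\rho_\psi(L,V)=1$), the comparison from Remark \ref{rem:thm_precise1}(1), and the $\limsup$-to-$\liminf$ upgrade from Remark \ref{rem:lim_vs_limsup}. No gaps; all steps check out.
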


\section{Proofs of the main results}

In this section we prove our main results, Theorems \ref{thm:closing_near_heavy}, \ref{thm:closing_near_heavy-2} and Corollary \ref{cor:pseudo-rotation}, assuming Theorem \ref{thm:precise1} (and Corollary \ref{cor:precise_for_invariant}). Note that Theorem \ref{thm:recurrent} directly follows from part (i) of Theorem \ref{thm:precise1} and Remark \ref{rem:thm_precise1}. 

\subsection{Proof of Theorem \ref{thm:closing_near_heavy}}

The two parts of the theorem have very similar proofs.  We provide here a proof of part (ii).

Note that if $\psi$ has a periodic point on $L$, then the conclusion holds for all elements of $\Ham_{L, \psi}(M,\omega)$.  Therefore, we will assume, for the rest of the proof, that $\psi$ has no periodic points on $L$.

Let $U_1 \supset U_2 \supset \cdots  U_n \supset \cdots$ be a nested sequence of  open neighborhoods of $L$ such that  $ \cap_n U_n = L$.  

Let $\mathcal{G}_n \subset \Ham_{L, \psi}(M,\omega)$ consist of all $\phi \in \Ham_{L, \psi}(M,\omega)$ that have a non-degenerate periodic point, say $x$, satisfying 
\begin{equation*}
                \rho_\phi (x, U_n) > 1- \frac{1}{n}.
\end{equation*}
The set $\mathcal{G}_n$ is open in $\Ham_{L, \psi}(M,\omega)$ because the existence of a point $x$ as above is an open condition. We will show below that the set $\mathcal{G}_n$ is also  $C^\infty$ dense in $\Ham_{L, \psi}(M,\omega)$. 
Assuming the density of $\mathcal{G}_n$, consider $\mathcal{G}:= \cap _n \mathcal{G}_n.$  The set $\mathcal{G}$ is dense by the Baire category theorem and, moreover, every $\phi \in \mathcal{G}$ has periodic points converging to an invariant measure supported on $L$; see the convergence criterion in Section \ref{sec:measures}.

It remains to show that $\mathcal{G}_n$ is dense in $\Ham_{L, \psi}(M,\omega)$. Given $\phi \in \Ham_{L, \psi}(M,\omega) \setminus \, \mathcal{G}_n$, by part (i) of Corollary \ref{cor:precise_for_invariant}, we can find  $\varphi_1$,  arbitrarily $C^\infty$ small, coinciding with the identity on $L$ and such that $\phi' = \varphi_1 \phi$  has a periodic point $x$ satisfying 
            \begin{equation*}
                \rho_{\phi'} (x, U_n) > 1- \frac{1}{n}.
            \end{equation*} 
If $x$ is degenerate, then we can pick an arbitrarily $C^\infty$-small $\varphi_2$ such that $x$ becomes a non-degenerate periodic point of $\phi'' := \varphi_2 \phi'$  and, moreover, the orbit of $x$ under $\phi''$ coincides with the orbit of $x$ under $\phi'$.   This can be achieved by picking $\varphi_2$ to fix $x$ and be supported in a small neighborhood $V$ of $x$; if $V$ is sufficiently small so that the sets $V, \phi'(V), \ldots, \phi'^{k-1}(V)$ are pairwise disjoint, then $x$ will have the same orbit under $\phi'$ and $\phi''$.  Note that $\phi'' \in \Ham_{L, \psi}(M,\omega)$ because $\varphi_2$ is supported away from $L$.  Furthermore, since the orbit of $x$ under $\phi''$ coincides with its orbit under $\phi'$, we have $\rho_{\phi''} (x, U_n)=\rho_{\phi'} (x, U_n)> 1-1/n$. This completes the proof of density of $\mathcal{G}_n$.

\subsection{Proof of Theorem \ref{thm:closing_near_heavy-2}}

    We need to construct a Hamiltonian $G:M\rightarrow\R$ supported in $U$, that satisfies $\varphi_G^s|_L=\operatorname{Id}$ for all $s \in [0,1]$, and such that for almost every $s\in [0,1]$, the composition $\varphi_G^s\psi$ has a sequence of periodic points converging to an invariant measure supported on $L$; see Section \ref{sec:measures}.

    We will use part (ii) of Corollary \ref{cor:precise_for_invariant}. Let $G:M\rightarrow\R$ be a non-negative Hamiltonian such that $G^{-1}(\max(G))= L$, in other words, $G$ coincides with its maximum on $L$ and is strictly smaller elsewhere. For each $\alpha\in(0,1)$, we have a full measure set
    \[
    S^\alpha:= \cup_{k\in \N} S^\alpha_k \subset[0,1]
    \]
    with the property that for all $s\in S^\alpha$, the composition $\varphi^s_G \psi$ has a periodic point satisfying \eqref{eq:cor_precise2}. Consider the sequence $\alpha_j:=1/j$ and set 
    \[
    S:= \cap_{j=1}^\infty S^{\alpha_j}.
    \]
    Then $m(S)=1$ as well, since it is a countable intersection of full measure sets. 
    
    Fix $s\in S$, and for ${\alpha_j}=1/j$, let $x_j$ be a periodic point of $\varphi_G^s \psi$ guaranteed by part (ii) of Corollary \ref{cor:precise_for_invariant} (here we are using the fact that $S\subset S^{\alpha_j}$ for all $j$). We claim that all limit points of the sequence
    \[
    \mu^{\varphi_G^s\psi}_{x_j}
    \]
    are supported on $L$. Indeed, let $\mu$ be a limit point. Taking the limit over the inequality \eqref{eq:cor_precise2}, we see that
    \[
    \int_M \frac{G}{\max(G)} \ d\mu = 1.
    \]
    Since $G^{-1}(\max(G))= L$, this is only possible if $\mu$ is supported on $L$.

\subsection{Proof of Corollary \ref{cor:pseudo-rotation}}
\label{sec:pf_pseudo}

Let $\psi \in \Ham(\CP^n, \omega_{FS})$ be a pseudo-rotation. For a contradiction, suppose that there exists a closed invariant heavy set $L \subset \CP^n \setminus \Fix(\psi)$ that is isolated as an invariant set. Fix an isolating neighborhood $V$ of $L$ disjoint from $\Fix(\psi)$. That is, we fix an open set $V \subset \CP^n \setminus \Fix(\psi)$ such that $L$ is the maximal, with respect to inclusion, invariant set of $\psi$ in $V$.

Next choose two open sets $U_1$, $U_2 \subset \CP^n$ with the  property
\[
L \subset U_1 \subset \overline{U_1} \subset U_2 \subset \overline{U_2} \subset V
\]
and let $G$ be a non-negative (and non-zero) autonomous Hamiltonian supported in $V$, equal to its maximum on $U_1$, and such that the derivative $dG$ is supported in $U_2 -\overline{U_1}$. 

Set $W=\overline{U_2} -U_1 \subset V$. By definition of $V$, for all $x \in W$ there exists $k_x \in \Z$ such that $\psi^{k_x}(x) \notin V$. Since $W$ is compact, there exists $K \in \N$, a uniform exit time, such that $k_x$'s above can be chosen to satisfy $|k_x| < K$ for all $x \in W$. Observe that this implies that a $C^0$-small perturbation of $\psi$ cannot have a periodic point passing through $W$ and whose entire orbit is  contained in $V$. Fix a small $\epsilon >0$ so that this property holds for the composition $\varphi^s_{\epsilon G} \psi$ for all $s\in [0,1]$.

Recall that, see \cite{GG, JS}, all pseudo-rotation of $\psi$ of $(\C P^n, \omega_{FS})$ are rigid with respect to the standard spectral norm $\gamma$, in other words, there exits a sequence of iterations $k_i \to \infty$ such that $\gamma(\psi^{k_i}) \to 0$. In particular, one can find $k \in \N$ which satisfies
$$
\gamma(\psi^{k})/\epsilon\max(G) <1/2.
$$
Then part (i) of Theorem \ref{thm:precise1} implies that there exist $s \in [0,1]$ such that the composition $\varphi^s_{\epsilon G} \psi$ has a $k$-periodic point whose orbit is entirely contained in $\supp(G) \subset V$. On the other hand, since the derivative $dG$ is supported in $U_2 -\overline{U_1} \subset W $, all newly created periodic points of the perturbation $\varphi^s_{\epsilon G} \psi$ should pass through $W$. This is a contradiction. 

\begin{rem}\label{rem:rigidity}[$\gamma$-rigidity]
The proof of Corollary \ref{cor:pseudo-rotation} word-by-word applies to any $\psi \in \Ham(M,\omega)$ which is rigid with respect to some spectral pseudo-norm: Such maps do not admit an isolated closed heavy invariant set $L \subset M\setminus \overline{\Per(\psi)}$. We restricted ourselves to pseudo-rotation of $(\C P^n, \omega_{FS})$ for simplicity. It is worth mentioning that, examples of rigid maps are very rare. In the case of the standard spectral norm, it is known that $C^\infty$-generically they don't exists; see \cite{CGG-spectral}. We don't know whether the same holds for a general spectral pseudo-norm as well. 
\end{rem}

\section{Proof of Theorem \ref{thm:precise1}}
\label{sec:pf_thm:precise1}
We present in this section our proof of Theorem \ref{thm:precise1}.  We assume throughout that $(M, \omega)$ is rational.

\subsection{Preliminary Lemmas}

The proof of Theorem \ref{thm:precise1} relies on the following three lemmas. We note that the rationally assumption on $(M,\omega)$ is only used in Lemma \ref{lem:derivative} and the heaviness of $L$ in Lemma \ref{lem:heavy}. 

\begin{lemma}
\label{lem:derivative}
    For any smooth family of Hamiltonians $H_s \colon \R /\Z \times M \to \R$  where $s \in [0,1]$, we have:
    \begin{itemize}
        \item[(i)] There exists $s\in [0,1]$ and a one-periodic orbit $x_s \colon \R /\Z \to M$ of the Hamiltonian $H_s$ such that  
        \begin{equation}
        \label{eq:spec_difference_leq_dHs}
            c(H_1) - c(H_0) \leq \int_{x_s} \partial_s H_s . 
        \end{equation}
        
        \item[(ii)] For $\eta>0$, let $S_{\eta}$ be the set of parameters $s \in [0,1]$ such that the inequality
         \begin{equation}
         \label{eq:ineq_forall_s_in_S_eta}
         c(H_1) - c(H_0) \leq \int_{x_s} \partial_s H_s +\eta
        \end{equation}
        holds  for some one-periodic orbit $x_s \colon \R /\Z \to M$ of $H_s$. We have 
        \begin{equation*}
       m(S_{\eta}) \geq \frac{\eta}{K-\Delta C + \eta} 
        \end{equation*}
        where $\Delta C := c(H_1) - c(H_0)$ and  $K := \max_{s,t,x} \partial_s H_s(t,x) $. 
        
    \end{itemize}
\end{lemma}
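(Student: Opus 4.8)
The plan is to exploit the interplay between the spectrality axiom and a derivative estimate for the function $s \mapsto c(H_s)$. The starting point is the standard fact that $c(H_s)$ is Lipschitz in $s$: by the Hofer stability axiom \eqref{eq:Hofer_cont}, for $s' > s$ we have
\[
c(H_{s'}) - c(H_s) \leq \int_0^1 \max_x \big(H_{s'}(t,x) - H_s(t,x)\big)\, dt = \int_0^1 \max_x \int_s^{s'} \partial_\sigma H_\sigma(t,x)\, d\sigma\, dt,
\]
and similarly with $\min$ on the left. Hence $g(s) := c(H_s)$ is Lipschitz, therefore differentiable almost everywhere, and at every point of differentiability one gets the pointwise bound $g'(s) \leq \int_0^1 \max_x \partial_s H_s(t,x)\, dt \leq K$. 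For part (i), I would first establish that wherever $g$ is differentiable, $g'(s) \leq \int_{x_s} \partial_s H_s$ for some one-periodic orbit $x_s$ of $H_s$ — this is exactly where rationality enters: by spectrality $g(s) = c(H_s) \in \Spec(H_s)$, and $\Spec(H_s)$ is a closed, measure-zero (in fact, for rational $M$, closed and nowhere dense) subset of $\R$ that varies "continuously" with $s$; the action values realizing $c(H_s)$ at nearby parameters, together with the action-derivative formula $\frac{d}{ds}\mathcal{A}_{H_s}(x_s, v_s) = \int_{x_s}\partial_s H_s$ (the capping contribution and the $\int X_{H_s}$-terms cancel because $x_s$ is a critical point), force $g'(s) = \int_{x_s}\partial_s H_s$ for a selection of orbits $x_s \in \Omega_{H_s}$. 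Then $\Delta C = c(H_1) - c(H_0) = \int_0^1 g'(s)\, ds = \int_0^1 \big(\int_{x_s}\partial_s H_s\big)\, ds$, and since an average is at most a supremum, there exists $s$ with $\int_{x_s}\partial_s H_s \geq \Delta C$, which is \eqref{eq:spec_difference_leq_dHs}.

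For part (ii) I would run a pigeonhole/Markov-type argument on the function $h(s) := \int_{x_s}\partial_s H_s$, which satisfies $h(s) \leq K$ everywhere and $\int_0^1 h(s)\, ds = \Delta C$ (or $\geq \Delta C$; in any case the average of the realized derivatives equals $\Delta C$). The set $S_\eta$ contains $\{ s : h(s) \geq \Delta C - \eta \}$. Write $m := m(S_\eta)$; then splitting the integral of $h$ over $S_\eta$ and its complement and bounding $h \leq K$ on $S_\eta$ and $h < \Delta C - \eta$ on the complement gives
\[
\Delta C = \int_0^1 h(s)\, ds \leq m K + (1-m)(\Delta C - \eta),
\]
and solving for $m$ yields $m \geq \dfrac{\eta}{K - \Delta C + \eta}$, as claimed. (A small care point: one must check $K - \Delta C + \eta > 0$, which follows since $\Delta C \leq K$.)

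The main obstacle I anticipate is the rigorous justification of the almost-everywhere identity $g'(s) = \int_{x_s}\partial_s H_s$ for a measurable selection of orbits $x_s$ — i.e. turning the Lipschitz bound $g'(s) \leq \int_0^1\max_x \partial_s H_s$ into the sharper statement that the derivative is actually attained along an honest one-periodic orbit. This is the step where the spectrality axiom and the rationality of $(M,\omega)$ are essential (as flagged in Remark \ref{rem:rational}): one needs that $c(H_s)$ lands in the action spectrum, that the spectrum is a "thin" set so that $g$ cannot wander between spectral values, and a compactness/limiting argument to extract the orbit $x_s$ realizing the one-sided difference quotients as $s' \to s^\pm$. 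Everything else — the Hofer-Lipschitz estimate, the action-derivative computation, and the measure-theoretic pigeonhole in part (ii) — is routine once this selection lemma is in place.
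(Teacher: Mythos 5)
Your overall strategy mirrors the paper's: show that the derivative of $s \mapsto c(H_s)$ is realized as $\int_{x_s}\partial_s H_s$ for a one-periodic orbit $x_s$ of $H_s$, then obtain (i) by averaging and (ii) by a Markov/pigeonhole argument. The Lipschitz estimate from Hofer stability, the action-derivative computation, and the arithmetic in part (ii) are all correct and match the paper's. But the step you flag as ``the main obstacle'' --- rigorously producing a (measurable) selection of orbits $x_s$ realizing $g'(s)$ at a.e.\ Lebesgue point of the merely Lipschitz function $g$ --- is precisely the content of the lemma, and your Rademacher-plus-selection plan is not how the paper closes it.

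The paper's route is a genericity-plus-approximation argument. One first restricts to \emph{generic} paths $H_s$, for which the set $X \subset [0,1]$ of parameters where $H_s$ is degenerate or has two capped orbits with equal action is \emph{finite} (this is where rationality enters). Outside $X$ one can choose a smooth one-parameter family of capped orbits $(x_s,v_s)$ with $\mathcal{A}_{H_s}(x_s,v_s) = c(H_s)$, so $s \mapsto c(H_s)$ is smooth on $[0,1]\setminus X$ and $\frac{d}{ds}c(H_s) = \int_{x_s}\partial_s H_s$ by the standard critical-point cancellation. The identity $c(H_1)-c(H_0) = \int_{[0,1]\setminus X}\int_{x_s}\partial_s H_s\,ds$ then gives both (i) and (ii) directly. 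The general case follows by taking a sequence of generic paths $H_s^j \to H_s$, using Hofer-continuity of $c$ and Arzel\`a--Ascoli to pass the orbit $x_{s_j}^j$ (and, for (ii), the limit set $S_\eta'$ together with the measure bound) to the limit. This sidesteps the measurable-selection subtlety entirely: you never need a.e.\ differentiability plus a pointwise orbit choice for an arbitrary Lipschitz path. If you tried to complete your version as written, you would effectively have to re-derive the generic structure of $s\mapsto\Spec(H_s)$ anyway, so it is cleaner to start from the generic reduction as the paper does.
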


We remark here that the proof of the first item in the above lemma requires computing the derivative of the map $s\mapsto c(H_s)$.  A similar strategy is applied in \cite[Lemma 3.4]{Irie-equidist} and \cite[Lemma 3.3]{prasad2022}.

\begin{proof}  
    We first prove the lemma for a generic path and then show that the general case follows by continuity. We argue as in \cite[Lemma 21]{HLeS}. Let $X\subset [0,1]$ be the set of parameters for which $H_s$ is degenerate or has two periodic orbits of the same action (such paths are referred to as admissible in \cite{HLeS}). For a generic path, $X$ is a finite set (here we use the assumption that $(M, \omega)$ is rational). We claim that in this case
\begin{equation}
\label{eq:spec_derivative}
     c(H_1)-c(H_0) = \int_{[0,1]\setminus X} \int_{x_s}  \partial_s H_s \ ds
\end{equation}
    for some one-periodic orbits $x_s$ of $H_s$. Indeed, for every $s_0\in [0,1]\setminus X$, one can find a (smooth) one-parameter family $\{(x_s, v_s)\}_{s\in[s_0-\epsilon, s_0+\epsilon]}$ of capped one-periodic orbits of $H_s$ such that $\mathcal{A}_{H_s}(x_s, v_s) = c(H_s)$. It follows that the function $s\mapsto c(H_s)$ is smooth on $[0,1] \setminus X$. Since $X$ is finite, we have
\[
    c(H_1)-c(H_0) = \int_{[0,1]\setminus X} \frac{d}{ds} c(H_s) \ ds. 
\]
    Let us compute the derivative $\frac{d}{ds} c(H_s)$ at $s_0\in [0,1]\setminus X$ to see that it agrees with the right hand side of \eqref{eq:spec_derivative}:
        \begin{align*}
        \frac{d}{ds} c(H_s)|_{s_0}&=\frac{d}{ds} \cA_{H_s} (x_s,v_s)|_{s_0} = \frac{d}{ds} \cA_{H_s} (x_{s_0},v_{s_0})|_{s_0} +  \frac{d}{ds} \cA_{H_{s_0}} (x_s,v_s) |_{s_0}\nonumber \\
        &=\frac{d}{ds} \Big(\int_{x_{s_0}} H_s - \int_{v_{s_0}} \omega\Big)\Big|_{s_0} + d\cA_{H_{s_0}}(x_{s_0},v_{s_0}) \nonumber\\
        &= \int_{x_{s_0}} \partial_s H_s|_{s_0} - 0+ 0
        \end{align*}
        where in the last equality we used the fact that the capped periodic orbit $(x_{s_0} , v_{s_0})$ is a critical point of $\cA_{H_{s_0}}$ and that the integral $ \int_{v_{s_0}} \omega$ is independent of $s$. 
        
        We now turn to the proofs of items (i) and (ii) for a generic path. Observe that (i) immediately follows from \eqref{eq:spec_derivative}. For (ii), using \eqref{eq:spec_derivative} again, we write \begin{align*}
            \Delta C \leq \int_{S_\eta}\int_{x_s} K \ ds +\int_{[0,1]\setminus (S_\eta\cup X)} \Delta C-\eta \ ds,
        \end{align*}
        where $K = \max_{s,t,x} \partial_s H_s(t,x)$, from which $ m(S_{\eta}) \geq \frac{\eta}{K-\Delta C + \eta}$ follows.

        Next we prove the general case. Let $H_s^j$ be a sequence of paths converging to $H_s$, that is, 
    \[
    H_s^j\xrightarrow{C^\infty}H_s
    \]
    uniformly in $s$. Suppose that the lemma holds for $H_s^j$ for all $j$. We will show that then it holds for $H_s$ as well. First note that, by continuity of spectral invariants, we have
        \[
        \Delta C_j \to \Delta C
        \]
        where $\Delta C_j:= c(H_1^j)-c(H_0^j)$. Secondly, by assumption, for each $j$, there exist $s_j \in [0,1]$ and a one-periodic orbit $x_{s_j}^j$ of $H^j_{s_j}$ that satisfies \eqref{eq:spec_difference_leq_dHs}. Then, up to passing to a subsequence, one can find $s\in [0,1]$ such that 
    \[
    s_j\xrightarrow[j\rightarrow\infty]{} s \quad\text{and}\quad H_{s_j}^j\xrightarrow[j\rightarrow \infty]{C^\infty} H_s.
    \]
    By Arzel\`a-Ascoli theorem, the orbits $x_{s_j}^j$  converge (up to a subsequence) to some one-periodic orbit $x_s$ of $H_s$. Observe that, by continuity, the limit $x_s$ satisfies \eqref{eq:spec_difference_leq_dHs} as well. 
    We conclude that (i) holds for an arbitrary path. 
   
    Regarding (ii), for each $j$, let $S_\eta^j \subset [0,1]$ be the set given by  \eqref{eq:ineq_forall_s_in_S_eta}. Define $S_\eta' \subset [0,1]$ as the set of  all limit points of the sequences $s_{j}\in S_\eta^{j}$.
   Observe that $S_\eta'$ is closed and every neighborhood of it contains $S_\eta^j$ for all sufficiently large $j$. It follows that
    \[
    \frac{\eta}{K-\Delta C+\eta}=\lim_{j\rightarrow \infty}\frac{\eta}{K_j-\Delta C_j+\eta}\leq \limsup_{j\rightarrow\infty} m(S_\eta^j) \leq m(S_{\eta}')
    \]
    where $K_j:= \max_{s,t,x} \partial_s H_s^j(t,x)$. It remains to show that $S_{\eta}' \subset S_{\eta}$. We argue as in part (i). For $s\in S_\eta'$, take $S_\eta^{j_k}\ni s_{j_k}\rightarrow s$ and periodic orbits $x_{s_{j_k}}^{j_k}$ of $H_{s_{j_k}}^{j_k}$ for which \eqref{eq:ineq_forall_s_in_S_eta} holds. After passing to a subsequence, they converge to a periodic orbit $x_s$ of $H_s$ which, by continuity, satisfies  \eqref{eq:ineq_forall_s_in_S_eta}. This completes the proof of the lemma. 
\end{proof}

\begin{setup}
\label{set:setup_for_lemmas}
    The next two lemmas are in the following setting:
\begin{itemize}
    \item $V\subset M$ is a fixed subset containing the heavy set $L$ from the statement of Theorem \ref{thm:precise1}. 
    
    \item $G:[0,1]\times M\rightarrow \R$ is a time-dependent non-negative Hamiltonian that satisfies:
    \begin{enumerate}
        \item $G(t,x)=0$ for $t$ near $0,1$ (this is to guarantee that the $\sharp$ operation is smooth).
        \item for every $t$, $G(t,-)$ coincides with its maximum on $V$ and is strictly smaller elsewhere:
        \[\text{for all } t\in [0,1],\quad G(t,x)=\max_{y\in M} G(t,y)\quad \text{if and only if } x\in V. \]
    \end{enumerate}
    For example, $G$ can be a multiple of the function from Theorem~\ref{thm:precise1} by a function of $t$ with compact support in $(0,1)$.
    Note that the condition 2 implies that the time-average of $G$ at any $x\in V$ is the Hofer norm, $\int_0^1 G(t,x)\ dt = \|G\|_H$.
    \item $\tilde{G_k} :=  G \circ \psi \sharp \cdots \sharp G \circ \psi^k$. For $s\in [0,1]$, we have 
\[
\varphi_{s\tilde{G_k}}^t =\begin{cases}
     \psi^{-j} \circ  \varphi_{sG}^{\{kt\}} \circ \psi^j \circ \varphi_{s\tilde G_{j-1}},& t\in [\frac{j-1}{k}, \frac{j}{k}],\  j=1,\dots, k
     \end{cases}
\]     
where $\{kt\}$ is the fractional part of $kt$. Note also that the time-one map is given by $\varphi_{s\tilde G_{k}} = \psi^{-k}(\varphi_{sG} \psi)^{k}$.
    \item $F_k$ is some Hamiltonian generating $\psi^k$ that vanishes for $t$ near $0,1$.
    \item $g_s \colon M\rightarrow\R$ is defined by 
    \begin{equation}\label{eq:avg_function}
        g_s(x):=\int_0^1 G\big(t, \varphi_{sG}^{t-1}(x)\big) \ dt.
    \end{equation} 
    Note that $g_s\equiv G$ when $G$ is autonomous.
\end{itemize}
\end{setup}

\begin{lemma}
\label{lem:support}
    Assume Setup~\ref{set:setup_for_lemmas}. Every one-periodic orbit $x_s \colon \R /\Z \to M$ of $H_s := (s\tilde{G_k} )\sharp F_k$ satisfies
    \[
    \int_{x_s} \partial_s H_s  = k\cdot \int_M g_s \ d\mu^{\varphi_{sG}\psi}_{x_s(0)}.
    \]
\end{lemma}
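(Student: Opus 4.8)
The plan is to compute $\int_{x_s} \partial_s H_s$ directly from the definitions and reorganize it into an integral of $g_s$ against the invariant measure $\mu^{\varphi_{sG}\psi}_{x_s(0)}$. Since $H_s = (s\tilde{G_k})\sharp F_k$ and $F_k$ does not depend on $s$, the $s$-derivative sees only the first ``half'' of the concatenation; more precisely, on the time interval $[1/2,1]$ (where $H_s = 2F_k(2t-1,\cdot)$, independent of $s$) we have $\partial_s H_s = 0$, so $\int_{x_s}\partial_s H_s = \int_{x_s|_{[0,1/2]}} \partial_s H_s$. On $[0,1/2]$ we have $H_s(t,x) = 2\,(s\tilde{G_k})(2t,x)$, hence $\partial_s H_s(t,x) = 2\,\tilde{G_k}(2t,x)$. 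Reparametrizing time (which is legitimate since we are integrating a function along the orbit), this contributes $\int_0^1 \tilde{G_k}(\tau, x_s(\text{reparametrized}))\,d\tau$, so the whole identity reduces to showing
\[
\int_0^1 \tilde{G_k}\big(\tau, y(\tau)\big)\,d\tau = k \int_M g_s \; d\mu^{\varphi_{sG}\psi}_{x_s(0)},
\]
where $y(\tau)$ is the portion of the orbit $x_s$ lying over the first piece, which is a one-periodic orbit of $s\tilde{G_k}$ based at $x_s(0)$.

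Next I would unravel $\tilde{G_k} = G\circ\psi \sharp \cdots \sharp G\circ\psi^k$ using the concatenation formula and the explicit flow description given in Setup~\ref{set:setup_for_lemmas}. The key observation is that since $\varphi_{s\tilde G_k} = \psi^{-k}(\varphi_{sG}\psi)^k$, a one-periodic orbit of $s\tilde{G_k}$ based at $p := x_s(0)$ corresponds to a fixed point of $(\varphi_{sG}\psi)^k$ (up to the conjugation by $\psi^k$, which I would track carefully); equivalently, its time-$j/k$ values trace out, after applying $\psi^j$, the $\varphi_{sG}\psi$-orbit of the point $p$. Breaking the integral $\int_0^1 \tilde{G_k}(\tau, y(\tau))\,d\tau$ into the $k$ subintervals $[(j-1)/k, j/k]$ and using $\tilde{G_k}(\tau,x) = k\,(G\circ\psi^j)(k\tau, x)$ there, each subinterval contributes $\int_0^1 (G\circ\psi^j)(u, \cdot)\,du = \int_0^1 G(u, \psi^j(\cdot))\,du$ evaluated along the relevant segment. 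After substituting the flow formula from the Setup and performing the change of variables that turns the inner time integral into the definition \eqref{eq:avg_function} of $g_s$, each of the $k$ terms becomes $g_s$ evaluated at a distinct point of the $\varphi_{sG}\psi$-orbit of $p$; summing and dividing appropriately yields exactly $k\int_M g_s\,d\mu^{\varphi_{sG}\psi}_{p}$, since that measure is $\frac{1}{k}\sum_{i=0}^{k-1}\delta_{(\varphi_{sG}\psi)^i(p)}$ when $p$ is $k$-periodic.

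The main obstacle I anticipate is bookkeeping the time reparametrizations and the conjugating powers of $\psi$ consistently — in particular, making sure that the segments of the orbit $x_s$ over $[(j-1)/k, j/k]$ are correctly identified, via the formula $\varphi_{s\tilde G_k}^t = \psi^{-j}\circ\varphi_{sG}^{\{kt\}}\circ\psi^j\circ\varphi_{s\tilde G_{j-1}}$, with the points $(\varphi_{sG}\psi)^{j-1}(p')$ for the appropriate base point $p'$, and that the change of variables $t \mapsto t-1$ inside \eqref{eq:avg_function} matches the one produced by unwinding the flow. There is also a minor subtlety that one should check $x_s(0)$ is genuinely a $k$-periodic point of $\varphi_{sG}\psi$ (rather than having smaller period), but since $g_s$ is evaluated against the measure this does not affect the final identity: if the period is $k' \mid k$, the measure $\mu^{\varphi_{sG}\psi}_{x_s(0)}$ is still well-defined and the sum of $k$ terms simply counts each orbit point $k/k'$ times, consistent with $k\int g_s\,d\mu$. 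Once the reparametrization and conjugation bookkeeping is set up cleanly, the rest is a routine change of variables.
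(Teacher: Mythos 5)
Your proposal follows essentially the same path as the paper's proof: restrict to the first half of the time interval where $\partial_s H_s = 2\tilde G_k(2t,\cdot)$, split the integral into the $k$ subintervals coming from the concatenation, substitute the explicit flow formula from Setup~\ref{set:setup_for_lemmas}, and change variables to recognize $g_s$ evaluated along the $\varphi_{sG}\psi$-orbit of $x_s(0)$. The bookkeeping you flag (tracking the conjugating powers of $\psi$ and the shift $t\mapsto t-1$ in \eqref{eq:avg_function}) is exactly what the paper carries out, and your remark about non-primitive period $k'\mid k$ is a harmless refinement.
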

\begin{proof}
We start by noticing that the Hamiltonian flow generated by the composition  $(s\tilde G_k)\sharp F_k$ is given by
\[
\varphi_{H_s}^t =\begin{cases}
     \psi^{-j} \circ  \varphi_{sG}^{\{2kt\}} \circ \psi^j \circ \varphi_{s\tilde G_{j-1}},& t\in [\frac{j-1}{2k}, \frac{j}{2k}],\  j=1,\dots, k\\
     \varphi_{F_k}^{2t-1} \circ \varphi_{s\tilde G_{k}} , & t\in [1/2,1]
\end{cases}
\]
where  
\[
\varphi_{s\tilde G_{j-1}} = \psi^{-(j-1)}(\varphi_{sG} \psi)^{j-1}.
\]
Note that $\varphi_{H_s} = (\varphi_{sG} \psi)^k$. Secondly, let us compute the $s$ derivative of $H_s$: 
\begin{align*}
    \partial_s H_s (t,x)=& \begin{cases}
    2\tilde G_k(2t,x), & t\in [0,1/2)\\
    0, & t\in [1/2,1]
\end{cases} \\
=& 
\begin{cases}
    2k\cdot G(2kt, \psi^j(x)), & t\in [\frac{j-1}{2k}, \frac{j}{2k}],\  j=1,\dots, k\\
    0, & t\in [1/2,1].
\end{cases}
\end{align*}
Overall we see that 
\begin{align*}
    \int_{x_s} \partial_s H_s &= \sum_{j=1}^k \int_{\frac{j-1}{2k}}^{\frac{j}{2k}} 2k\cdot G\Big(2kt,\ \psi^j \circ \varphi_{H_s}^t(x_s(0)) \Big)\ dt \\
    &= \sum_{j=1}^k \int_{\frac{j-1}{2k}}^{\frac{j}{2k}} 2k\cdot G\Big(2kt,\ \psi^j \circ \psi^{-j} \circ \varphi_{sG}^{\{ 2t\cdot k\}} \circ \psi^j \circ \varphi_{s\tilde G_{j-1}}  (x_s(0)) \Big)\ dt \\
    &= \sum_{j=1}^k \int_0^1 G\Big(t,\  \varphi_{sG}^{t} \circ \psi^j \circ \varphi_{s\tilde G_{j-1}}  (x_s(0)) \Big)\ dt \\
    &= \sum_{j=1}^k \int_0^1 G\Big(t,\ \varphi_{sG}^{t} \circ  \psi \circ (\varphi_{sG}\psi)^{j-1} (x_s(0))\Big)\ dt\\
    &= \sum_{j=1}^k \int_0^1 G\big(t,\ \varphi_{sG}^{t} \circ \varphi_{sG}^{-1} \circ (\varphi_{sG}\psi)^{j} (x_s(0))\big)\ dt.
\end{align*}
Recalling that  
\[
g_s(x)=\int_0^1  G\big(t,\varphi_{sG}^{t-1}(x) \big) \ dt,
\]
we see that 
\[\int_{x_s}\partial_sH_s = \sum_{j=1}^k  g_s \circ   (\varphi_{sG}\psi)^{j} (x_s(0)) =k\cdot \int_M g_s \ d\mu^{\varphi_{sG}\psi}_{x_s(0)}
\] 
where in the second equality we used the fact that $x_s(0)$ is a $k$-periodic point of $\varphi_{sG}\psi$. 
\end{proof}

\begin{lemma}
\label{lem:heavy}
Assume Setup~\ref{set:setup_for_lemmas}. We have:
$$
T^k_{\psi}(L, V)\cdot {\|G\|_H} - \gamma(F_k) \leq c(\tilde{G}_k \sharp F_k) - c(F_k).  
$$
\end{lemma}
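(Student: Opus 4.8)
The plan is to use sub-additivity of $c$ to peel off the $F_k$ factor and then to apply the time-dependent heaviness bound (Lemma \ref{lem:heavy-time-dependent}) to the Hamiltonian $\tilde G_k = G\circ\psi \sharp\cdots\sharp G\circ\psi^k$. First I would write, by sub-additivity \eqref{eq:tri}, $c(\tilde G_k \sharp F_k) \le c(\tilde G_k) + c(F_k)$ — wait, that is the wrong direction; what I actually want is to bound $c(\tilde G_k\sharp F_k) - c(F_k)$ from below by $c(\tilde G_k) + (\text{something})$. The right move is to note $\tilde G_k = (\tilde G_k \sharp F_k)\sharp \bar F_k$ up to the homotopy/normalization bookkeeping of Section \ref{sec:hamiltonians}, so that sub-additivity gives $c(\tilde G_k) \le c(\tilde G_k\sharp F_k) + c(\bar F_k)$. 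Hence $c(\tilde G_k \sharp F_k) - c(F_k) \ge c(\tilde G_k) - c(\bar F_k) - c(F_k) = c(\tilde G_k) - \gamma(F_k)$, using the definition $\gamma(F_k) = c(F_k) + c(\bar F_k)$ from Section \ref{sec:spec-norm}. (One must be slightly careful that the concatenation $(\tilde G_k\sharp F_k)\sharp\bar F_k$ generates the same element of $\tHam$ as $\tilde G_k$ and has the same mean value, so that \eqref{eq:normal} applies; this is exactly the sort of reparametrization/normalization argument already used in Section \ref{sec:hamiltonians}.)

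Next I would estimate $c(\tilde G_k)$ from below. By Lemma \ref{lem:heavy-time-dependent} applied to the heavy set $L$,
\[
c(\tilde G_k) \ge \int_0^1 \inf_{x\in L} \tilde G_k(t,x)\, dt.
\]
Now unwind the definition of $\tilde G_k$: on the $j$-th subinterval $[\tfrac{j-1}{k},\tfrac{j}{k}]$ it equals $k\, G(kt, \psi^j(x))$. So
\[
\int_0^1 \inf_{x\in L}\tilde G_k(t,x)\,dt = \sum_{j=1}^k \int_{\frac{j-1}{k}}^{\frac{j}{k}} k \inf_{x\in L} G(kt,\psi^j(x))\, dt = \sum_{j=1}^k \int_0^1 \inf_{x\in L} G(t, \psi^j(x))\, dt.
\]
The key observation is that $\inf_{x\in L} G(t,\psi^j(x)) = \inf_{y\in \psi^j(L)} G(t,y)$, and whenever $\psi^j(L)\subset V$ (i.e.\ whenever the event counted by $T^k_\psi(L,V)$ occurs at index $j$) we have $\inf_{y\in\psi^j(L)} G(t,y) = \max_y G(t,y) = \|G\|_H$ for all $t$, by property 2 of $G$ in Setup \ref{set:setup_for_lemmas} together with the remark there that $\int_0^1 G(t,x)\,dt = \|G\|_H$ for $x\in V$; for the remaining indices the corresponding summand is $\ge 0$ since $G\ge 0$. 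Therefore $\int_0^1\inf_{x\in L}\tilde G_k(t,x)\,dt \ge T^k_\psi(L,V)\cdot\|G\|_H$, which combined with the previous paragraph yields exactly the claimed inequality.

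I expect the main obstacle to be the bookkeeping in the first paragraph: making precise that $c(\tilde G_k \sharp F_k) - c(F_k) \ge c(\tilde G_k) - \gamma(F_k)$, i.e.\ correctly combining sub-additivity with the homotopy-invariance/normalization axiom so that concatenating and then removing $F_k$ recovers $\tilde G_k$ without spurious mean-value corrections. The heaviness step is essentially a direct substitution into Lemma \ref{lem:heavy-time-dependent}, and the identification of $\inf_{x\in L}G(t,\psi^j(x))$ with $\|G\|_H$ on the "good" indices is immediate from the defining properties of $G$; the only care needed there is to keep track that the subinterval rescaling $t\mapsto kt$ contributes a factor $k$ to the integrand but the interval has length $1/k$, so the two cancel and each good index contributes precisely $\|G\|_H$.
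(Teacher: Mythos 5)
Your proof is correct and follows essentially the same approach as the paper: first peel off the $F_k$ factor via the decomposition $c(\tilde G_k) = c(\tilde G_k\sharp F_k\sharp \bar F_k) \le c(\tilde G_k\sharp F_k) + c(\bar F_k)$ (using homotopy invariance plus the mean-value cancellation $\int\int F_k = -\int\int\bar F_k$, then sub-additivity), then bound $c(\tilde G_k)$ from below via Lemma \ref{lem:heavy-time-dependent} and the observation that on each good index the $\sharp$-rescaling factor $k$ cancels the $1/k$ interval length to contribute exactly $\|G\|_H$. The paper performs exactly this computation, so there is nothing to add beyond noting the normalization bookkeeping you flagged indeed goes through as expected.
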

\begin{proof}
    Our main task is to bound $c(\tilde G_k)$ from below, since 
    \[
        c(\tilde G_k) = c(\tilde G_k\sharp F_k\sharp \bar F_k) \leq c(\tilde G_k\sharp F_k)+c(\bar F_k)  = c(\tilde G_k\sharp F_k) - c(F_k) +\gamma(F_k).
    \]
    Observe that the required inequality will follow once we show that
    \[
    c(\tilde G_k)\geq T^k_\psi(L,V)\cdot \|G\|_H.
    \]
    This in turn relies on the heaviness  of $L$ which, by Lemma \ref{lem:heavy-time-dependent}, 
 yields the following inequality: 
    \begin{align*}
        c(\tilde G_k)  \geq \int_0^1\min_{x\in L} \tilde G_k(t,x)\ dt.
    \end{align*}  
    Recall that $\tilde G_k := G\circ \psi \sharp\cdots \sharp G\circ \psi^k$ and by definition of $\sharp$, 
    \[
    \tilde G_k(t,x) = k\cdot G(kt, \psi^j(x)) \quad \text{for} \quad t\in [\frac{j-1}{k},\frac{j}{k}].
    \]
    Due to our assumption, the restriction $G\vert_V$ only depends on $t$, and furthermore, for all $x\in V$, we have $\int_0^1G(t,x)\ dt=\|G\|_H$. It follows that
    \begin{align*}
        \int_0^1\min_{x\in L} \tilde G_k(t,x)\ dt  &= \sum_{j=1}^k \int_{\frac{j-1}{k}}^{\frac{j}{k}} k\cdot\min_{x\in L} G\big(kt, \psi^j(x)\big)\  dt\\
        &\geq \#\{1\leq j\leq k: \psi^j(L)\subset V\}\cdot \|G\|_H =T^k_\psi(L,V)\cdot \|G\|_H 
    \end{align*}
\end{proof}

\subsection{Proof of Theorem \ref{thm:precise1}}

Throughout this section we assume Setup \ref{set:setup_for_lemmas}. Observe that the autonomous Hamiltonian $G \colon M \to \R$ from the statement of Theorem \ref{thm:precise1} is not exactly as in the setup. To fix this we choose a smooth, monotone increasing and onto function $\lambda \colon [0,1] \to [0,1]$ with $\lambda'(t)\equiv0 $ near $t=0, 1$ and work with  $\lambda'G$ instead of $G$. Note that we have $\supp(G) = \supp(\lambda'G)$,  $\varphi^s_G= \varphi_{s\lambda'G}$ and $\max(G) = \|\lambda'G\|_H$. In what follows we denote by $\tilde{G}_{\lambda, k}$ the Hamiltonian  
$$
\tilde{G}_{\lambda, k} := \lambda' G \circ \psi \sharp \cdots \sharp \lambda' G \circ \psi^k.
$$

\begin{rem}
\label{rem:avg_function}
In this remark we observe that for a re-parametrized Hamiltonian  of the form above $\lambda' G$,  the associated average function $g_s \colon M \to \R$, see \eqref{eq:avg_function}, coincides with $G$. Hence, in this particular case,  Lemma \ref{lem:support} takes the following form: For every one-periodic orbit $x_s$ of $H_s= (s\tilde{G}_{\lambda, k})\sharp F_k$, we have
 \[
    \int_{x_s} \partial_s H_s  = k\cdot \int_M G \ d\mu^{\varphi^s_{G}\psi}_{x_s(0)}.
    \]
Note that we have also replaced $\varphi_{s\lambda'G}$ with $\varphi^s_G$. Let us show that this is indeed the case: 
\begin{align*}
    g_s(x):= \int_0^1 \lambda'(t)\cdot G(\varphi_{s\lambda'G}^{t-1}(x))\ dt &= \int_0^1 \lambda'(t)\cdot G(\varphi_{sG}^{\lambda(t)-1}(x))\ dt \\
    &=\int_0^1 \lambda'(t)\cdot G(x)\ dt =G(x)
\end{align*}
where we used $\varphi_{s\lambda'G}^{t-1}(x)=\varphi_{sG}^{\lambda(t)-1}$ and the fact that $G$ is invariant  under $\varphi_{sG}^{\lambda(t)-1}$.
\end{rem}

\noindent \underline{\textit{proof of part (i):}} Fix $k \in \N$ and let $F_k$ be a Hamiltonian generating $\psi^k$. For the family $H_s:=(s\tilde{G}_{\lambda,k} \sharp F_k)$, where $s\in [0,1]$, let $x_s \colon S^1 \to M$ be as in the first part of Lemma \ref{lem:derivative}. By Lemmas \ref{lem:derivative},  \ref{lem:support}, \ref{lem:heavy} and Remark \ref{rem:avg_function}, we have
\begin{align*}
    T^k_{\psi}(L, V) \|\lambda'G\|_H - \gamma(F_k) \leq&\  k\cdot \int_M G \ d\mu_{x_s(0)}^{\varphi^s_{G}\psi}.
\end{align*}
We replace $\|\lambda'G\|_H=\max(G)$ and re-organize:
\begin{equation}
\label{eq:pf_part1}
    \frac{1}{\max(G)}\int_M G \ d\mu_{x_s(0)}^{\varphi^s_{G}\psi} \geq \frac{T^k_{\psi}(L, V)}{k}- \frac{\gamma(F_k)}{k\cdot \max(G)}. 
\end{equation}
To conclude that there exists a $k$-periodic point $x$ of $\varphi^s_{G}\psi$ which satisfies \eqref{eq:thm_precise1_i}, we take the limit of \eqref{eq:pf_part1} over certain Hamiltonians generating $\psi^k$. Namely, let $F_k^i$ be a sequence of Hamiltonians, all generating $\psi^k$, and such that $\gamma(F_k^i) \to \gamma(\psi^k)$; and let $x_s^i$ be the respective one-periodic orbits satisfying \eqref{eq:pf_part1}. Observe that any limit point $x$ of the sequence $x_s^i(0)$ is a $k$-periodic point of $\varphi^s_{G}\psi$ and it satisfies \eqref{eq:thm_precise1_i}. This completes the proof of part (i). 
\medskip

\noindent \underline{\textit{proof of part (ii):}} Note that when $T_{\psi}^k(L, V)<\infty$, the recurrence frequency $\rho_{\psi}(L,V) = 0$. In this case, one can simply choose $S_k^{\alpha} = \emptyset$ for all $k\in \N$. In the rest of the argument, we work under the assumption that  $T_{\psi}^k(L, V) \to \infty $ as $k \to \infty$. For each $k \in \N$, choose a re-parametrization function $\lambda_k \colon [0,1] \to [0,1]$ which (additionally) satisfies
\begin{equation}
\label{eq:lambda}
    \max (\lambda'_k) \leq 1+1/k.
\end{equation}
Next, fix $0 <\alpha <1$, and set
\[
\eta_k := \alpha T_{\psi}^k(L, V) \| \lambda'_k G \|_H  - C_{\gamma}
\]
where $C_{\gamma} >0$ is the universal upper bound on the spectral pseudo-norm. Assume that $k \in \N$ is large enough so that $\eta_k> 0$. (Recall that $\| \lambda'_k G \|_H = \max(G)$.) For $k \in \N$ smaller than this threshold, similar to above, we set $S_k^{\alpha} = \emptyset$. 

For each $k \in \N$, choose a Hamiltonian $F_k$ generating $\psi^k$.  For the family $(s\tilde{G}_{\lambda_k, k} \sharp F_k)$, where $s\in [0,1]$, and $\eta_k >0$ chosen as above, let $S_k^{\alpha} := S_{\eta_k} \subset [0,1]$  be as in the second part of Lemma \ref{lem:derivative} and, for $s\in S_k^{\alpha}$, let $x_{k,s} \colon \R/ \Z \to M$ be the one-periodic orbit provided by the same lemma. Set 
\[
\Delta C_k := c(\tilde{G}_{\lambda_k, k}  \sharp F_k) - c(F_k).
\]
By Lemmas \ref{lem:derivative}, \ref{lem:support} and Remark \ref{rem:avg_function} we have
\[
k\cdot\int_M G \ d\mu_{x_s(0)}^{\varphi^s_G}\geq \Delta C_k -\eta_k 
\]
for all $s \in S_k^{\alpha}$. On the other hand, by Lemma \ref{lem:heavy}, we have
\begin{equation}
\label{eq:heavy_proof}
     \Delta C_k -\eta_k  \geq (1-\alpha)  T_{\psi}^k(L, V) \| \lambda'_k G \|_H.
\end{equation}
Recalling the definition of $\eta_k$, it follows that
\[
k\cdot\int_M G \ d\mu_{x_s(0)}^{\varphi^s_G}\geq   (1-\alpha) T_{\psi}^k(L, V) \| \lambda'_k G \|_H
\]
for all $s \in S_k^{\alpha}$. This is identical to \eqref{eq:thm_precise1_ii} since $ \| \lambda'_k G \|_H =\max (G)$. 

It remains to show that 
\begin{equation}
\label{eq:measure_proof}
     \limsup_{k \to \infty} m(S_k^{\alpha}) \geq \frac{\alpha \rho_{\psi}(L, V)}{1-(1-\alpha)\rho_{\psi}(L, U)}.
\end{equation}
By the second part of Lemma \ref{lem:derivative}, we have
\[
     m(S_k^{\alpha}) \geq \eta_k / (K_k-\Delta C_k + \eta_k)
\]
where 
\begin{equation}
\label{eq:bound_K}
  K_k := \max \partial_s (s\tilde{G}_{\lambda_k, k} \sharp F_k) = k \max (\lambda'_k)\max (G) \leq k (1+1/k) \max (G);  
\end{equation}
see \eqref{eq:lambda}. Using \eqref{eq:heavy_proof},  \eqref{eq:bound_K} and $\| \lambda'_k G \|_H = \max(G)$, we write
\begin{align*}
  \frac{\eta_k}{K_k-\Delta C_k + \eta_k} &\geq \frac{\alpha T_{\psi}^k(L, V) \max(G) - C_{\gamma}}{k (1+1/k) \max(G) - (1-\alpha) T_{\psi}^k(L, V) \max(G)} \\ 
  &= \frac{\alpha T_{\psi}^k(L, V)/k - C_{\gamma}/k\max(G)}{(1+1/k)- (1-\alpha) T_{\psi}^k(L, V)/k}.
\end{align*}
Taking the $\limsup$ of the right hand side yields \eqref{eq:measure_proof}. This completes the proof of Theorem \ref{thm:precise1}.

\bibliographystyle{plain}
\bibliography{refs}

\medskip
 \noindent Erman \c C\. inel\. i\\
\noindent D-MATH, ETH Zürich, Rämistrasse 101, 8092 Zürich, Switzerland.\\
 {\it e-mail:} erman.cineli@math.ethz.ch

\medskip
 \noindent Sobhan Seyfaddini\\
\noindent D-MATH, ETH Zürich, Rämistrasse 101, 8092 Zürich, Switzerland.\\
 {\it e-mail:} sobhan.seyfaddini@math.ethz.ch

 \medskip
 \noindent Shira Tanny\\
\noindent School of Mathematics, Weizmann Institute of Science, Rehovot, Israel.\\
 {\it e-mail:} shira.tanny@weizmann.ac.il
 
\end{document}